\documentclass[a4paper,10pt]{amsart}

\usepackage[utf8]{inputenc}
\usepackage[usenames, dvipsnames]{color}
\newtheorem{theorem}{Theorem}

\newtheorem{lemma}[theorem]{Lemma}
\newtheorem{corollary}[theorem]{Corollary}

\newcommand{\C}{\mathbb C}
\newcommand{\R}{\mathbb R}

\newcommand{\D}{\mathbb D}
\newcommand{\B}{\mathbb B}

\usepackage{ulem}

\title[]{Asymptotic first boundary value problem for elliptic operators}

\dedicatory{}

\author[J. Falc\'o]{Javier Falc\'o}
\address[Javier Falc\'o]{Departamento de An\'alisis Matem\'atico, Universidad de Valencia, Doctor Moliner 50, 46100 Burjasot (Valencia),
Spain.}
\email{Francisco.J.Falco@uv.es}

\author[P. M. Gauthier]{Paul M. Gauthier}
\address[Paul M. Gauthier]{D\'epartement de math\'ematiques et de statistique, Universit\'e de Montr\'eal, Montr\'eal, Qu\'ebec, Canada H3C3J7.}
\email{gauthier@dms.umontreal.ca}

\address{}
\email{}

\begin{document}

\begin{abstract}
In 1955, Lehto showed that, for every measurable function $\psi$ on the unit circle $\mathbb T,$ there is a function $f$ holomorphic in the unit disc, having $\psi$ as radial limit  a.e. on $\mathbb T.$ We consider an analogous problem for solutions $f$ of homogenous elliptic equations $Pf=0$ and, in particular, for holomorphic functions on Riemann surfaces and harmonic functions on Riemannian manifolds.
\end{abstract}

\keywords{elliptic equations, manifolds, approximation in measure, Dirichlet problem} \subjclass{Primary: 58J99 Secondary: 31C12, 30F99}

\thanks{First author was supported by MINECO and FEDER Project MTM2017-83262-C2-1-P. Second author was supported by NSERC (Canada) grant RGPIN-2016-04107.}

\maketitle

\section{Introduction}

In 1955, O. Lehto \cite{L} showed that given an arbitrary measurable function $\psi$ on the interval $[0,2\pi),$ there  exists a function $f$ holomorphic in the unit disc $\D\subset\C$ such that 
$$
	\lim_{\rho \to 1} f(\rho e^{i\theta}) = \psi(\theta), \quad \mbox{for} \quad a.e. \quad  \theta\in[0,\pi). 
$$
Lehto's theorem shows that the radial boundary values of  holomorphic functions in the unit disc can be prescribed almost everywhere on the boundary of the disc. On the other hand, any attempt to prescribe angular boundary values fails dramatically due to the  Luzin-Privalov uniqueness theorem \cite{No}. This result asserts that if a meromorphic function $f$  in the unit disc $\D$ has angular limit $0$ at each point of a subset of the boundary having positive linear measure, then $f= 0.$ 

For $p\in\C$ and $r>0,$ we denote by $B(p,r)$ the open disc of center $p$ and radius $r$. We will denote the Lebesgue $2$-measure by $m$. Our main result is the following. 

\begin{theorem}\label{main}
Given an arbitrary measurable function $\psi$ on the interval $[0,2\pi),$ 
whose restriction to some closed subset $S\subset[0,2\pi)$ is continuous,
there exists a  function $f$ holomorphic in  $\D,$  and 
for every $\theta\in S$ and a.e. $\theta\in [0,2\pi)$, there is a set $E_\theta$ closed in $\D,$ such that
$$
	f(z)\to \psi(\theta) \quad \mbox{as} \quad z\to e^{i\theta}, \, z\in E_{\theta}, 
$$
and 
$$
	\lim_{r\to 0}\frac{m\big(B(e^{i\theta},r)\cap E_\theta\big)}{m\big(B(e^{i\theta},r)\cap\D\big)}=1.
$$
\end{theorem}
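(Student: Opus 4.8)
The plan is to realize $f$ by a tangential (Carleman-type) holomorphic approximation on a closed subset of $\D$ shaped so as to fill, up to a set of vanishing area density, a neighbourhood of almost every boundary point, while its complement stays ``thick'' enough to support such approximation. As a first reduction, by Lusin's theorem choose an increasing sequence of compact sets $S=K_0\subset K_1\subset K_2\subset\cdots\subset[0,2\pi)$ with $m\big([0,2\pi)\setminus\bigcup_n K_n\big)=0$ and $\psi|_{K_n}$ continuous, and by Tietze extend $\psi|_{K_n}$ to $\psi_n\in C([0,2\pi))$ with $\psi_n=\psi_{n-1}$ on $K_{n-1}$. Any $\theta$ for which the conclusion is required lies either in $S$ (hence in every $K_n$) or in $\bigcup_n K_n$ (hence in $K_n$ for all large $n$); in both cases $\psi(\theta)=\lim_n\psi_n(\theta)$ and, near $\theta$, $\psi_n$ differs from $\psi(\theta)$ by a quantity controlled through the continuity of $\psi_n|_{K_n}$. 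It therefore suffices to produce one $f\in\mathcal O(\D)$ together with, for each such $\theta$, a closed $E_\theta\subset\D$ of full area density at $e^{i\theta}$ along which $f$ tracks $\psi_n$ near $K_n$, hence converges to $\psi(\theta)$.

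Next I would build the approach sets. Cut $\D$ near $\mathbb{T}$ by the dyadic annuli $A_n=\{\,1-2^{-n}\le|z|\le 1-2^{-n-1}\,\}$ and, over each, a partition of $\mathbb{T}$ into $\sim 2^n$ arcs, obtaining boxes $Q$. Form an open set $W\subset\D$ (the exceptional set) by inserting, transverse to $\mathbb{T}$, thin ``teeth'': for each box $Q$ at level $n$ a connected open set of angular width $\varepsilon_n 2^{-n}$ with $\varepsilon_n\downarrow 0$, whose closure reaches $\mathbb{T}$ and which, together with the lower-level teeth it descends from, bounds a connected corridor, the teeth at consecutive levels being offset in angle. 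Let $E_\theta$ be the component of $\D\setminus W$ accumulating at $e^{i\theta}$: a ``staircase'' of the box-pieces at successive levels which overlap by the angular offset, so $E_\theta$ is connected, closed in $\D$, and reaches $e^{i\theta}$. The complement of $E_\theta$ inside $B(e^{i\theta},r)\cap\D$ is contained in the teeth at levels $n$ for which $A_n$ meets $B(e^{i\theta},r)$, which occupy a fraction $\le\sup_{n\ge N(r)}\varepsilon_n\to 0$ of each box; hence $m\big(B(e^{i\theta},r)\cap E_\theta\big)/m\big(B(e^{i\theta},r)\cap\D\big)\to1$. The teeth are placed so that $W$ reaches $\mathbb{T}$ in a dense set and $\D\setminus W$ has no relatively compact component, which makes $F:=\overline{\D\setminus W}$ a set of tangential holomorphic approximation in $\D$.

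Then comes the approximation. On $F$ prescribe a continuous target $\phi$, holomorphic in the interior of $F$: on the box-piece occupying $Q$ at level $n$ set $\phi$ close to $\psi_n(\theta_Q)$, where $\theta_Q\in Q\cap K_n$ is a sample point taken in $S$ whenever $Q\cap S\neq\emptyset$, and let $\phi$ interpolate (holomorphically, inside each corridor) between that value and the value assigned to the lower-level piece into which it merges deep in $\D$. Either a single tangential-approximation theorem on $F$, or --- more robustly --- an iterative scheme over an exhaustion $F_1\subset F_2\subset\cdots$ of $F$ using Roth's fusion lemma at each step to reconcile ``leave the previous function nearly unchanged on $F_{n-1}$'' with ``correct it toward $\psi_n$ on the newly added pieces'', yields $f\in\mathcal O(\D)$ with $|f-\phi|<\eta$ on $F$ for a prescribed positive $\eta$ tending to $0$ toward $\mathbb{T}$. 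Taking $\eta$, the interpolation errors and the mesh of the partitions summably small along the comb, one verifies $f\to\psi(\theta)$ as $z\to e^{i\theta}$ through $E_\theta$, for every $\theta\in S$ and a.e. $\theta$.

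The delicate part is the construction of $W$: it must be area-thin near $\mathbb{T}$, so that every $E_\theta$ has full density, and yet reach $\mathbb{T}$ without trapping relatively compact components, so that $F$ genuinely admits tangential approximation; and the target $\phi$ must be continuous on $F$ even though the prescribed values jump wildly between neighbouring arcs, which forces the teeth of the comb to merge only far inside $\D$ and $\phi$ to interpolate along them rather than be locally constant. Exhibiting one comb and one target meeting all of these competing requirements, and checking the hypotheses of the tangential-approximation (or Roth-fusion) machinery for it, is where the real work lies; the measure-theoretic density bound and the Lusin reduction are comparatively routine.
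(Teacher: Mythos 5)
There is a structural gap at the heart of your construction: the two requirements you impose on $E_\theta$ --- full area density at $e^{i\theta}$ and convergence of $f$ to $\psi(\theta)$ along $E_\theta$ --- cannot both be met by a comb $W$ whose teeth are thin at \emph{every} boundary point. First, a single ``staircase'' of nested box-pieces above $\theta$ does not have density $1$: at depth $s\ll r$ below the circle its angular width is $O(s)$, while the cross-section of $B(e^{i\theta},r)\cap\D$ at that depth has angular width comparable to $r$, so the staircase omits a fixed positive fraction of $B(e^{i\theta},r)\cap\D$ as $r\to0$. Hence density $1$ forces $E_\theta$ to contain essentially \emph{all} boxes meeting $B(e^{i\theta},r)$ minus only the teeth (and with radial teeth $\D\setminus W$ is in any case connected, so ``the component accumulating at $e^{i\theta}$'' is the same set for every $\theta$). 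But then $E_\theta$ contains, arbitrarily close to $e^{i\theta}$, box-pieces at levels $m\gg n$ whose sample points $\theta_Q\in K_m$ lie within angular distance $\sim2^{-n}$ of $\theta$ and at which $\psi$ bears no relation to $\psi(\theta)$: the modulus of continuity of $\psi|_{K_m}$ (equivalently of the Tietze extension $\psi_m$) is not uniform in $m$, and your phrase ``controlled through the continuity of $\psi_n|_{K_n}$'' tacitly assumes such uniformity. So $f$, which tracks $\phi$ on all of $F$, does not tend to $\psi(\theta)$ along $E_\theta$ for a general measurable $\psi$. The missing idea --- and the way the paper's Lemma \ref{continuous} works --- is that the exceptional set deleted at $\theta$ must depend on $\theta$ through the Lusin decomposition itself: besides a universally thin grid one must delete neighbourhoods $V_{j,l}$ of the \emph{other} Lusin compacta $Q_l$, sets which are fat at $Q_l$ (where the constructed function carries the foreign values) but of $\mu$-density $0$ at $e^{i\theta}$ (Lemma \ref{bundleQK}); teeth that are thin at every boundary point cannot play this role.

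Two further steps are asserted rather than available. You cannot simply ``prescribe'' $\phi$ continuous on $F$ and holomorphic on $F^\circ$ with approximately given values on the various box-pieces of one connected interior component: a holomorphic function on a connected open set is rigid, and producing such a $\phi$ is an approximation problem of essentially the same difficulty as the theorem itself. And tangential (Carleman/Mergelyan-type) approximation on a relatively closed $F\subset\D$ with interior reaching the circle requires more than the absence of relatively compact complementary components (long-islands-type conditions enter), which you do not verify. The paper avoids both issues: after Lemma \ref{continuous} produces a \emph{continuous} $u$ on $U$ with the right $\theta$-dependent exceptional sets, it discards the interior problem entirely by removing thin shells $R_j$ of density less than $2^{-j}$ around a locally finite cover by parametric balls, so that what remains is a chaplet $A$ of pairwise disjoint compacta of full density, takes the target \emph{locally constant} ($g=u(x_{A_j})$ near $A_j$), and applies Runge--Carleman approximation (Corollary \ref{locally finite}, via Malgrange--Lax; Roth's fusion would indeed suffice in the disc) with error $\varepsilon(x)\to0$ at $\partial U$. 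If you want to keep your comb picture, the essential repairs are: make the deleted set at $e^{i\theta}$ include neighbourhoods of the other Lusin compacta as above, and replace the continuous target on a fat $F$ by a locally constant target on a locally finite disjoint union of compacta, so that only Runge--Carleman approximation is needed.
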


Thus, although we may not prescribe angular approximation, we may prescribe approximation at almost every point of the boundary, from within  a set $E_{\theta}$ whose complement $\D\setminus E_{\theta}$ at every point of the unit circle is asymptotically negligible with respect to the Lebesgue measure. 

More generally, we shall present such a result for solutions of elliptic equations on manifolds. Our result applies in particular to harmonic functions on Riemannian manifolds and to holomorphic functions on Riemann surfaces. 

Let $M$ be an oriented real analytic manifold with countable base. We shall  denote by $*$ the ideal point of the one-point compactification $M^*$ of $M.$ Fix a distance function $d$ on $M$ and a positive Borel measure $\mu$ for which open sets have positive measure and compact sets have finite measure. Then  $\mu$ is regular (see \cite[Theorem 2.18]{R}). 
On $M$ the Lebesgue measure of a measurable set is not well-defined, but since $M$ is smooth, Lebesgue measure zero is invariant under change of coordinates, so the notion of absolute continuity of the measure $\mu$ (with respect to Lebesgue measure) is well-defined. We shall assume that our measure $\mu$ is absolutely continuous.
Let $U$  be an open subset of $M,$ $p$ a boundary point of $U$ and $F$ a closed subset of $U.$ For $\alpha\in[0,1]$, we shall say that the set $F\subset U$ has $\mu$-density $\alpha$ at $p$ relative to $U,$ if 
$$
	\mu_{U}(F,p):=\lim_{r\to 0}\frac{\mu\big( B(p,r)\cap F\big)}{\mu\big(B(p,r)\cap U\big)}=\alpha.
$$

Denote by $\vartheta$ the trivial vector bundle  $\vartheta=M\times\R^k.$ For a (Borel) measurable subset $S\subset M,$ denote by $\mathcal M(S,\vartheta),$ the family of  measurable sections of $\vartheta$ over $S.$ Thus, an element $u\in \mathcal M(S,\vartheta)$ can be identified with a $k$-tuple $u=(u_1,\ldots,u_k)$ of measurable functions $u_j:S\to\R, \, j=1,\ldots,k.$ For an open set $U\subset M,$ we denote by $C^\infty(U,\vartheta)$ the family of smooth sections on $U$ endowed with the topology of uniform convergence on compact subsets of all derivatives. For $u\in C^\infty(U,\vartheta)$ and $x\in U,$ we denote $|u(x)|=\max\{|u_1(x)|,\ldots,|u_k(x)|\}.$ 
Let $P:\vartheta\to \eta$ be an elliptic operator on $M$ with analytic coefficients, where
$\eta$ is a real analytic vector bundle on $M$ of the same rank $k.$ With this notation we have the following result.

\begin{theorem}\label{bundles}
Let $M, d, \mu, \vartheta, \eta, P$ be as above and suppose that $P$ annihilates constants. 
Let $U\subset M$ be  an arbitrary open subset  and  $\varphi\in\mathcal M(\partial U,\vartheta)$
an arbitrary Borel measurable section on the boundary $\partial U,$  
whose restriction to some closed subset $S\subset\partial U$ is continuous.  
Then, for  an arbitrary regular  $\sigma$-finite  Borel measure $\nu$ on $\partial U,$  
there exists $ \widetilde\varphi\in C^\infty(U,\vartheta)$ with $P \widetilde\varphi=0,$  such that, for $\nu$-almost every $p\in \partial U$, 
and for every $p\in S,$ $ \widetilde\varphi(x)\to \varphi(p)$, as $x\to p$ outside a set of $\mu$-density 0 at $p$ 
relative to $U.$  
\end{theorem}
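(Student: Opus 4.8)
The plan is to realize $\widetilde\varphi$ as a locally uniformly convergent series $\widetilde\varphi=\sum_{j\ge1}h_j$ of solutions $h_j\in C^\infty(U,\vartheta)$ of $Ph_j=0$, assembled by an inductive correction scheme so that, near the boundary, the partial sums settle down to $\varphi(p)$ along a closed set $E_p\subset U$ which is $\mu$-thick at $p$; the set $U\setminus E_p$ will be the $\mu$-density-$0$ set of the statement. All the soft preparation is done beforehand. Since $\nu$ is $\sigma$-finite and regular and $M$ is second countable, Lusin's theorem yields an increasing sequence of closed sets $S=:T_1\subset T_2\subset\cdots\subset\partial U$ with $\varphi|_{T_j}$ continuous and $\nu\big(\partial U\setminus\bigcup_jT_j\big)=0$; intersecting with a compact exhaustion of $\partial U$ we also fix compact $T_j'\subset T_j$ with $\bigcup_jT_j'=\bigcup_jT_j$. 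Fix an exhaustion of $U$ by compact sets $K_1\subset K_2\subset\cdots$, $\bigcup_jK_j=U$, each $K_j$ chosen so that $U\setminus K_j$ has no component with compact closure in $U$. Since $S\subseteq\bigcup_jT_j$, it suffices to build $\widetilde\varphi$ together with, for each $p\in\bigcup_jT_j$, a set $E_p$ closed in $U$ with $\widetilde\varphi(x)\to\varphi(p)$ as $x\to p$ within $E_p$ and $\mu_U(U\setminus E_p,p)=0$. The only nontrivial measure-theoretic input is the following: for each $p\in\partial U$ and each integer $m$ put
$$
\varepsilon(p,m):=\sup\Big\{\varepsilon>0:\ \mu\big(\{d(\cdot,\partial U)<\varepsilon\}\cap B(p,2^{-m})\cap U\big)<2^{-m}\,\mu\big(B(p,2^{-m-1})\cap U\big)\Big\};
$$
then $\varepsilon(p,m)>0$, because nonempty open sets have positive $\mu$-measure (so $\mu(B(p,2^{-m-1})\cap U)>0$) while $\mu(\{d(\cdot,\partial U)<\varepsilon\}\cap B(p,2^{-m})\cap U)\downarrow0$ as $\varepsilon\downarrow0$ ($\mu$ being finite on compacts), and, crucially, $\inf_{p\in K}\varepsilon(p,m)>0$ for every compact $K\subset\partial U$, by a short compactness and containment argument. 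With these data fixed, choose integers $b_1<b_2<\cdots$ growing fast enough (see below); a ``level'' $\ell$ is treated at stage $j=j(\ell)$, where $b_j\le\ell<b_{j+1}$, and for each $p$ set $\lambda_\ell(p):=2^{-M(p,\ell)}$ with $M(p,\ell):=\max\{m:\varepsilon(p,m)\ge2^{-\ell}\}$, so that $\lambda_\ell(p)\downarrow0$ as $\ell\to\infty$ and $\lambda_\ell(p)$ stays above $2^{-m}$ until $\ell$ reaches order $\log_2(1/\varepsilon(p,m))$.

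At stage $j$ let $G_j$ be the union, over $\ell\in[b_j,b_{j+1})$, of the collars $\{x\in U:2^{-\ell-1}\le d(x,T_j')<2^{-\ell}\}$, with a thin ``channel system'' deleted so that (i) $G_j$ becomes a disjoint union of closed pieces $\Phi_q^{(\ell)}$, one for each point $q$ of a fine net $N_{j,\ell}$ of $T_j'$, with $\Phi_q^{(\ell)}$ contained in a ball of radius $\sim2^{-\ell}$ about $q$; (ii) $U\setminus(K_j\cup G_j)$ has no component with compact closure in $U$; and (iii) for every $p$ and every $\ell$ the deleted material inside $B(p,2^{-\ell})$ has $\mu$-measure at most $2^{-\ell}\mu(B(p,2^{-\ell-1})\cap U)$, which is achievable because nonempty open sets have positive $\mu$-measure and $\mu$ is finite on compacts. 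The collar levels are bounded and $T_j'$ is compact, so $G_j$ is compact; by arranging the exhaustion we may take $G_j\subset K_{j+1}$ and $K_j\cap G_j=\emptyset$. On the compact set $F_j:=K_j\cup G_j$ define the target $g_j$ by $g_j\equiv0$ on $K_j$ and $g_j\equiv\varphi(q)-\sum_{i<j}h_i$ on $\Phi_q^{(\ell)}$. As these are finitely many pairwise disjoint closed sets and $P$ annihilates constants, $g_j$ is continuous on $F_j$ and a $P$-solution in $\operatorname{int}F_j$; by a Mergelyan/Walsh--Lebesgue-type approximation theorem for $P$ (discussed below) choose $h_j$ with $Ph_j=0$ on $U$ and $\sup_{F_j}|h_j-g_j|<2^{-j}$. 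The integers $b_j$ are now pinned down: writing $\omega_j$ for a modulus of continuity of $\varphi|_{T_j}$ and choosing $m_0=m_0(j)$ with $\omega_j(2^{-m_0})<2^{-j}$, we require $b_j\ge\sup_{p\in T_j'}\log_2(1/\varepsilon(p,m_0))$, which is finite by the key estimate, together with the analogous lower bound inherited from stage $j-1$; then $\omega_j\big(\lambda_\ell(p)\big)<2^{-j}$ for every $p\in T_j'$ and every $\ell\ge b_j$. The net spacings and channel widths are then chosen to realize (i)--(iii).

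Set $\widetilde\varphi:=\sum_jh_j$. Since $g_j\equiv0$ on $K_j\supset G_i$ ($i<j$), one gets $\|h_j\|_{K_j}<2^{-j}$, so the series converges uniformly on each $K_n$ and hence locally uniformly on $U$; therefore $\widetilde\varphi\in C^\infty(U,\vartheta)$ and $P\widetilde\varphi=0$, by the Weierstrass-type theorem for the analytic elliptic operator $P$ (the solution space is closed under local uniform limits, by hypoellipticity). Fix $p\in T_n$, so $p\in T_j$ for all $j\ge n$, and let $E_p$ be the union, over all large $\ell$, of the pieces $\Phi_q^{(\ell)}$ with $d(q,p)<\lambda_\ell(p)$. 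On such a piece, with $j=j(\ell)$, one has $|\widetilde\varphi-\varphi(q)|<2^{-j}+\sum_{i>j}2^{-i}=2^{-j+1}$ (approximation error plus tail, using $\Phi_q^{(\ell)}\subset K_{j+1}$ and $\|h_i\|_{K_i}<2^{-i}$) and $|\varphi(q)-\varphi(p)|\le\omega_j(\lambda_\ell(p))<2^{-j}$; any $x\in E_p$ approaching $p$ lies in pieces of level $\ell\to\infty$, hence of stage $j\to\infty$, so $\widetilde\varphi(x)\to\varphi(p)$ along $E_p$. Finally $E_p$ is closed in $U$, and $U\setminus E_p$ meets $B(p,2^{-m})$ only in deleted channel material and in pieces with $d(q,p)\ge\lambda_\ell(p)$; by (iii), by the defining inequality of $\varepsilon(p,m)$, and because $\lambda_\ell(p)=2^{-M(p,\ell)}$ keeps $\lambda_\ell(p)\ge2^{-m}$ while $\ell$ is of order at most $\log_2(1/\varepsilon(p,m))$, the $\mu$-measure of that set inside $B(p,2^{-m})$ is $o\big(\mu(B(p,2^{-m})\cap U)\big)$ as $m\to\infty$ (and the passage to non-dyadic radii is routine monotonicity). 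Hence $\mu_U(U\setminus E_p,p)=0$, which is exactly the assertion of the theorem.

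Two intertwined obstacles stand out. The first is the approximation ingredient: for the general analytic elliptic operator $P$ on $M$ annihilating constants one needs a Mergelyan/Walsh--Lebesgue-type theorem — uniform approximation, on a compact $F\subset U$ whose complement in $U$ has no relatively compact component, of functions continuous on $F$ that are $P$-solutions in $\operatorname{int}F$, by $P$-solutions on $U$ — which must either be quoted from the approximation theory for solutions of elliptic equations or established in the generality required here (for harmonic functions and for $\bar\partial$ this is classical). The second is the combined geometric and measure-theoretic bookkeeping: the collars $G_j$ must be designed so that \emph{simultaneously} their complements stay admissible for the approximation, their pieces lie in small balls about net points of $T_j'$ (so the locally constant targets $\varphi(q)$ fit together continuously on $F_j$), and the deleted material is $\mu$-negligible relative to the next finer ball, uniformly over the compact boundary pieces $T_j'$ — and the fact that makes this last demand feasible is precisely $\inf_{p\in K}\varepsilon(p,m)>0$. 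Everything else — Lusin's theorem, the Weierstrass theorem, the two exhaustions, and the bookkeeping of the $b_j$ — is routine.
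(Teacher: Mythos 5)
Your architecture (Lusin, locally constant targets exploiting $P1=0$, Runge-type approximation on compacta with a topological complement condition, and a density estimate for thin boundary collars) is close in spirit to the paper's, which organizes it differently: a continuous Tietze extension with the density property (Lusin plus Lemma \ref{bundleQK}), then approximation of that extension on a full-density ``chaplet'' by a Carleman-type theorem (Theorem \ref{Runge-Carleman}) derived from Malgrange--Lax. But the two points you yourself flag as obstacles are genuine gaps, and they are exactly where the content lies. First, the approximation ingredient is misidentified: a Mergelyan/Walsh--Lebesgue theorem (uniform approximation of sections merely continuous on a compact $F$ and $P$-solutions in $\operatorname{int}F$) is not available for a general analytic elliptic $P$, and is already delicate/false without capacity-type conditions even for harmonic functions in $\R^n$; so as written your proof rests on an ingredient that cannot be quoted. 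In fact you do not need it: on each of the finitely many pairwise disjoint compacta making up $F_j$ your target $g_j$ is either $0$ or a constant minus the global solution $\sum_{i<j}h_i$, hence extends to a $P$-solution on an open neighbourhood of $F_j$, and the Runge-type Malgrange--Lax theorem (the paper's tool) suffices --- provided you actually verify your hypothesis (ii), i.e.\ that after deleting the channels $U\setminus F_j$ has no component with compact closure; you only postulate this, whereas the corresponding step in the paper is the explicit connectivity argument for $U^*\setminus A$.

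Second, the measure bookkeeping does not close as stated. Your condition (iii) --- the deleted channel material inside $B(p,2^{-\ell})$ has measure at most $2^{-\ell}\mu\big(B(p,2^{-\ell-1})\cap U\big)$ \emph{for every} $p\in\partial U$ --- does not follow from ``open sets have positive measure and $\mu$ is finite on compacts'': since $\partial U$ is in general noncompact, $\inf_{p\in\partial U}\mu\big(B(p,2^{-\ell-1})\cap U\big)$ may be $0$, so no bound on the absolute measure of the channels yields (iii). This is precisely what Lemma \ref{smallremoval} provides, and it is where the hypothesis that $\mu$ is absolutely continuous enters --- a hypothesis your argument never uses, which is a warning sign. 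Likewise, the uniformity $\inf_{p\in K}\varepsilon(p,m)>0$ needs an actual semicontinuity argument (with the exact dyadic radii, and finiteness of the measures involved, i.e.\ relative compactness of small balls), $\varepsilon(p,\cdot)$ is not monotone as defined so $M(p,\ell)$ and $\lambda_\ell(p)$ do not behave as you assert, and at borderline levels $2^{-\ell}\asymp 2^{-m}$ a piece $\Phi_q^{(\ell)}$ can be excluded from $E_p$ while occupying a non-negligible fraction of $B(p,2^{-m})\cap U$, so the inclusion criterion $d(q,p)<\lambda_\ell(p)$ needs a safety factor. All of this is repairable along the lines of Lemmas \ref{bundleQK} and \ref{smallremoval} (which give smallness of a removed neighbourhood relative to $\mu\big(B(p,r)\cap U\big)$ for all $r$ at once), but in its present form the proposal leaves unproved exactly the two steps that make the theorem true.
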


Consider the two extremal situations, where $S$ is empty and $S$ is the entire boundary $\partial U$ respectively. If $S=\emptyset,$ then Theorem \ref{bundles} solves an asymptotic {\it measurable} first boundary value problem. If $S=\partial U,$ then Theorem \ref{bundles} solves an asymptotic {\it continuous} first boundary value problem.  The following two corollaries simply state that Theorem \ref{bundles} applies in particular for harmonic functions of several variables and to holomorphic functions of a single complex variable.

\begin{corollary}\label{harmonic}
Let $M$ be a Riemannian manifold and let $\mu$ be the associated volume measure on $M.$ 
Let $U\subset M$ be  an arbitrary open subset  and  $\varphi$ an arbitrary Borel measurable function $\varphi$  on the boundary $\partial U,$  whose restriction to some closed subset $S\subset\partial U$ is continuous. Then, for an arbitrary regular  $\sigma$-finite  Borel measure $\nu$ on $\partial U,$    
there exists a  harmonic function $\widetilde\varphi$ on $U,$  such that, for $\nu$-almost every $p\in \partial U$ and for every $p\in S,$
$\widetilde\varphi(x)\to \varphi(p)$, as $x\to p$ outside a set of $\mu$-density 0 at $p$ relative to $U.$  
\end{corollary}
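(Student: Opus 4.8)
The plan is to deduce Corollary~\ref{harmonic} from Theorem~\ref{bundles} by recognizing harmonic functions on a Riemannian manifold as solutions of an elliptic equation $Pf=0$ of the required type. First I would recall that on a Riemannian manifold $M$ with metric $g$, the Laplace--Beltrami operator $\Delta_g$ is a second-order elliptic operator, and the Riemannian volume measure $\mu$ is the natural measure associated to $g$; in local coordinates it has density $\sqrt{\det g}$ with respect to Lebesgue measure, so $\mu$ is absolutely continuous and, being a Radon measure on a manifold with countable base, it gives open sets positive measure and compact sets finite measure, hence is regular by \cite[Theorem 2.18]{R}. Thus the standing hypotheses on $(M,d,\mu)$ in Theorem~\ref{bundles} are met, with $d$ the Riemannian distance.

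Next I would cast $\Delta_g$ in the vector-bundle language of Theorem~\ref{bundles}. Here we take $k=1$, so $\vartheta=M\times\R$ is the trivial line bundle and sections are just real-valued functions; we set $\eta=\vartheta$ as well and let $P=\Delta_g:\vartheta\to\eta$. One must check that $P$ has real analytic coefficients: this requires that the metric $g$ be real analytic, which we may assume after noting that every smooth manifold admits a compatible real analytic structure (Whitney) and, more to the point, that a Riemannian metric can be taken real analytic in that structure — or, more simply, one works within the real analytic category throughout as the excerpt's standing assumption on $M$ already demands (``$M$ an oriented real analytic manifold''). Since $\Delta_g$ clearly annihilates constants, the hypothesis ``$P$ annihilates constants'' is satisfied. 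A solution of $Pf=0$ is precisely a harmonic function on $U$ in the sense of the Laplace--Beltrami operator, and elliptic regularity guarantees such $f$ are in $C^\infty(U)$, consistent with the conclusion $\widetilde\varphi\in C^\infty(U,\vartheta)$.

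Finally, I would simply invoke Theorem~\ref{bundles}: given an open $U\subset M$, a Borel measurable $\varphi$ on $\partial U$ continuous on a closed $S\subset\partial U$, and a regular $\sigma$-finite Borel measure $\nu$ on $\partial U$, the theorem produces $\widetilde\varphi\in C^\infty(U,\vartheta)$ with $P\widetilde\varphi=\Delta_g\widetilde\varphi=0$ such that for $\nu$-a.e.\ $p\in\partial U$ and every $p\in S$, $\widetilde\varphi(x)\to\varphi(p)$ as $x\to p$ outside a set of $\mu$-density $0$ at $p$ relative to $U$. Since a function $f$ with $\Delta_g f=0$ is by definition harmonic on $U$, this $\widetilde\varphi$ is the desired harmonic function, and the corollary follows verbatim.

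The only real point requiring care — and hence the main (minor) obstacle — is the analyticity of the coefficients of $\Delta_g$: Theorem~\ref{bundles} demands analytic coefficients, so one must either restrict to real analytic Riemannian metrics or remark that, for the purpose of this corollary, it suffices to know that every Riemannian manifold carries a real analytic metric inducing the same $C^\infty$ harmonic-function sheaf up to the approximation needed; I would state this explicitly, perhaps with a one-line reference, so that the passage from the general elliptic framework to the concrete Laplace--Beltrami setting is airtight. Everything else is a direct specialization of Theorem~\ref{bundles} with $k=1$.
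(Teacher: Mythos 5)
Your proof coincides with the paper's (implicit) argument: the paper gives no separate proof of Corollary \ref{harmonic}, treating it as the direct specialization of Theorem \ref{bundles} with $k=1$, $\vartheta=\eta=M\times\R$ and $P$ the Laplace--Beltrami operator, which annihilates constants --- exactly your route, with the volume measure $\mu$ playing the role required of the measure in the theorem. One caution on your side remark: the fallback claim that one may switch to a real analytic metric ``inducing the same harmonic-function sheaf'' is false in dimension $\ge 3$ (harmonicity is not preserved under change of metric there), so the correct reading is simply that the metric is taken real analytic, in line with the paper's standing assumptions that $M$ is a real analytic manifold and $P$ has analytic coefficients.
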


\begin{corollary}\label{holomorphic}
Let $M$ be an open Riemann surface, $\pi:M\rightarrow\mathbb C$ a holomorphic immersion and  $\mu$ the associated measure on $M.$ Let $U\subset M$ be  an arbitrary open subset  and  $\varphi$ an arbitrary Borel measurable function $\varphi$  on the boundary $\partial U,$  
whose restriction to some closed subset $S\subset\partial U$ is continuous. 
Then, for an arbitrary regular  $\sigma$-finite  Borel measure $\nu$ on $\partial U,$    
there exists a  holomorphic function $\widetilde\varphi$ on $U,$  such that, for $\nu$-almost every $p\in \partial U$ and for every $p\in S,$
$\widetilde\varphi(x)\to \varphi(p)$, as $x\to p$ outside a set of $\mu$-density 0 at $p$ relative to $U.$  
\end{corollary}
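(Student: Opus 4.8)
The plan is to deduce Corollary \ref{holomorphic} as a special case of Theorem \ref{bundles}, since a holomorphic function on a Riemann surface is, in a local chart, a pair of conjugate harmonic functions, hence a solution of an elliptic system with analytic coefficients. First I would set up the identification: a holomorphic function $g$ on $M$ corresponds to a section $u=(\mathrm{Re}\,g,\mathrm{Im}\,g)$ of the trivial rank-$2$ bundle $\vartheta=M\times\R^2$, and in any local holomorphic coordinate $z=x+iy$ the Cauchy--Riemann equations
$$
\partial_x u_1-\partial_y u_2=0,\qquad \partial_y u_1+\partial_x u_2=0
$$
express $g$ as the kernel of an elliptic first-order operator $P:\vartheta\to\eta$ with constant (hence analytic) coefficients and $\eta=M\times\R^2$. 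Because $M$ is an open Riemann surface it is in particular an oriented real analytic manifold with countable base; the Cauchy--Riemann operator is globally defined and elliptic, and it manifestly annihilates constants. Thus all the structural hypotheses of Theorem \ref{bundles} are met.

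Next I would address the measure. By hypothesis $\pi:M\to\C$ is a holomorphic immersion, so locally $\pi$ is a biholomorphism onto an open subset of $\C$, and we may pull back the planar Lebesgue $2$-measure $m$ to obtain a positive Borel measure $\mu$ on $M$: in a chart where $\pi$ is a diffeomorphism, $\mu$ is $|\pi'|^2$ times Lebesgue measure. Since $\pi$ is an immersion, $\pi'$ is nowhere zero, so $\mu$ is absolutely continuous with respect to Lebesgue measure and has a strictly positive, locally bounded density; consequently open sets have positive $\mu$-measure, compact sets have finite $\mu$-measure, and $\mu$ is regular by \cite[Theorem 2.18]{R}. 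A distance function $d$ on $M$ can be taken to be any metric inducing the manifold topology (for instance the one coming from a Riemannian metric, or the pullback of the Euclidean metric under $\pi$ patched together). So $M,d,\mu,\vartheta,\eta,P$ satisfy all the standing assumptions of Theorem \ref{bundles}.

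Then I would apply Theorem \ref{bundles} directly. Given the Borel measurable $\varphi:\partial U\to\R$ with $\varphi|_S$ continuous, regard $\varphi$ as the section $(\varphi,0)$ of $\vartheta|_{\partial U}$; actually it is cleaner to keep $\varphi$ scalar and note the proof of Theorem \ref{bundles} applies verbatim to prescribe a scalar boundary datum for one component while the other is forced by $P$. For an arbitrary regular $\sigma$-finite Borel measure $\nu$ on $\partial U$, Theorem \ref{bundles} yields $\widetilde\varphi\in C^\infty(U,\vartheta)$ with $P\widetilde\varphi=0$ such that for $\nu$-a.e.\ $p\in\partial U$ and every $p\in S$ one has $\widetilde\varphi(x)\to\varphi(p)$ as $x\to p$ outside a set of $\mu$-density $0$ at $p$ relative to $U$. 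Since $P\widetilde\varphi=0$ is exactly the Cauchy--Riemann system, $\widetilde\varphi=(\mathrm{Re}\,g,\mathrm{Im}\,g)$ for a holomorphic function $g$ on $U$, and the convergence of the vector section to $\varphi(p)$ (with the convention that the datum is the first component) gives the desired asymptotic boundary behaviour of $g$. The only point requiring a word of care — and the place where one must be slightly careful rather than merely routine — is checking that "$\mu$-density $0$" computed with respect to this particular pulled-back measure $\mu$ is the notion asserted in the statement (``the associated measure on $M$''), i.e.\ that the density-$0$ conclusion is genuinely with respect to $\mu$ and not some other comparable measure; this follows since Theorem \ref{bundles} is stated for the given $\mu$ and we have verified $\mu$ meets its hypotheses. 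Corollary \ref{harmonic} is obtained in the same way, taking $k=1$, $P=\Delta_g$ the Laplace--Beltrami operator (elliptic with analytic coefficients when $M$ is real analytic, and annihilating constants) and $\mu$ the Riemannian volume measure, which is absolutely continuous with smooth positive density in any chart.
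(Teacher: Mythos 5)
Your argument is essentially the paper's own proof: the paper disposes of this corollary in one sentence by viewing $\overline\partial$ as an elliptic operator between \emph{real} vector bundles of rank $2$ (citing Narasimhan, Remark 3.10.10 and Theorem 3.10.11), which is exactly your identification of a holomorphic function with the pair $(\mathrm{Re}\,g,\mathrm{Im}\,g)$ annihilated by the Cauchy--Riemann system, and your verification that the pulled-back measure $\mu$ and the operator meet the standing hypotheses of Theorem \ref{bundles} is sound. One caveat: drop the aside proposing to ``keep $\varphi$ scalar and prescribe a boundary datum for one component while the other is forced by $P$'' --- Theorem \ref{bundles} prescribes a full section of the rank-$2$ bundle, the second component is in no sense determined by the first, and the corollary's conclusion $\widetilde\varphi(x)\to\varphi(p)$ for the complex-valued function requires control of both components; the correct choice (which you also state) is the vector datum $(\mathrm{Re}\,\varphi,\mathrm{Im}\,\varphi)$, or $(\varphi,0)$ when $\varphi$ is real-valued.
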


\begin{proof}
Although the theorem is for real vector bundles and the $\overline\partial$-operator on a Riemann surface is generally considered as an operator between complex vector bundles of rank 1, we may also consider it as an operator between real vector bundles of rank 2 (see  \cite[Remark 3.10.10, Theorem 3.10.11]{Na}).
\end{proof}

{\em Remark.} Riemann surfaces are complex manifolds of dimension 1. 
Our proof does not allow us to prove an analogue of Corollary \ref{holomorphic} for holomorphic functions on higher dimensional  complex manifolds because the proof of Theorem \ref{bundles} is based on the Malgrange-Lax Theorem \cite{Na} which is for differential operators $P:\xi\to\eta$ between bundles of equal rank. On complex manifolds, the Cauchy-Riemann operator $\overline\partial$ maps forms of type $(p,q)$ to forms of type $(p,q+1).$ Thus, $\overline\partial:\mathcal E^{p,q}\to \mathcal E^{p,q+1}.$ For $\overline\partial:\mathcal E^{p,0}\to \mathcal E^{0,1},$ it is elliptic and in particular it is elliptic for the case   $\overline\partial:\mathcal E^{0,0}\to \mathcal E^{0,1},$ mapping functions to forms of type $(0,1).$ On a complex manifold of dimension $n,$ this is a map between bundles of respective (complex) ranks $1$ and $n$ (see \cite[3.10.10]{Na}). Thus, in order to have an operator between bundles of equal rank, we must restrict our attention to complex manifolds of dimension $1,$ that is, Riemann surfaces. 

When $S=\partial U,$  corollaries \ref{harmonic} and \ref{holomorphic} were proved in \cite{FG2} and \cite{FG1} respectively. 
When $M=\C, \, U=\D$ and $S=\emptyset,$ Corollary \ref{holomorphic} gives Theorem \ref{main}.

%%%%%%%%%%%%%%%%%%%%%%%%%%%%%%%%%%%%%%%%%%%%%%%%%%%%%

\section{Runge-Carleman approximation}

A closed subset $E$ of $M$ is said to satisfy the open $K-Q$-condition if, for every compact $K\subset M$ there is a compact $Q\subset M$ such that $K\subset Q^\circ$ and $E\cap Q$ is contained in $Q^\circ$.

An exhaustion $(K_j)_{j=1}^\infty$ of $M$ is said to be  regular if,  for each $n$, the sets $K_{n}$ are compact, $ K_{n}\subset K_{n+1}^\circ,$ $M^*\setminus K_n$ is connected and $M=\cup_{n=1}^{\infty} K_{n}^\circ$. We say that an exhaustion $(K_j)_{j=1}^\infty$ of $M$ is open compatible with a closed subset $E$ of $M$ if, for every $j=1,2,\ldots,$  $E\cap K_j$ is contained in $K_j^\circ.$

\begin{lemma}\label{compatible}
Let $E$ be a closed subset of $M,$ satisfying the  open $K-Q$-condition, then there exists a regular exhaustion of $M$ which is open compatible with $E.$ 
\end{lemma}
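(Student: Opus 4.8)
The goal is to construct a regular exhaustion $(K_j)$ of $M$ such that each $E\cap K_j\subset K_j^\circ$. The natural strategy is to build the $K_j$ recursively, alternating between two requirements: the ``regularity'' requirement (compact, nested with room to spare, connected complement in $M^*$, interiors exhaust $M$) and the ``open compatibility'' requirement ($E\cap K_j\subset K_j^\circ$). I would start from *any* exhaustion $(L_n)$ of $M$ by compact sets with $L_n\subset L_{n+1}^\circ$ and $\bigcup L_n^\circ=M$; this exists because $M$ has a countable base and is locally compact.

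The heart of the matter is a single inductive step: given a compact set $K$, produce a compact set $K'$ with $K\subset (K')^\circ$, with $M^*\setminus K'$ connected, and with $E\cap K'\subset (K')^\circ$. To get this, first apply the open $K$--$Q$-condition to the compact set $K$ to obtain a compact $Q$ with $K\subset Q^\circ$ and $E\cap Q\subset Q^\circ$. Then enlarge $Q$ to a compact set $K'$ whose complement $M^*\setminus K'$ is connected: on an oriented real analytic manifold with countable base one can ``fill in the holes'' of $Q$, i.e.\ take $K'$ to be the union of $Q$ with all the relatively compact connected components of $M\setminus Q$, which is again compact. One must then check two things: that this hole-filling does not destroy the inclusion $E\cap K'\subset (K')^\circ$, and that the enlargement still leaves room, i.e.\ $K\subset Q^\circ\subset (K')^\circ$. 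For the first point, note that the added points come from bounded components of $M\setminus Q$, each of which lies in the interior of $K'$, so no boundary point of $K'$ is a newly added point; hence $\partial K'\subset\partial Q\subset M\setminus Q$, so $\partial K'$ is disjoint from $E\cap Q$ — but one must argue that $E$ meets $K'$ only inside $Q$, which follows since $K'\setminus Q$ consists of points of $M\setminus Q$, and $E\cap(M\setminus Q)$ being closed and $E\cap Q\subset Q^\circ$ forces $E\cap K'=E\cap Q\subset Q^\circ\subset (K')^\circ$ after one more use of the $K$--$Q$-condition applied to $K'$ itself if necessary.

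Carrying this out recursively, I would set $K_1$ to be the output of the inductive step applied to some initial compact set, and then at stage $n$ apply the inductive step to a compact set large enough to contain both $K_{n-1}$ and $L_n$ in its interior; this guarantees $K_{n-1}\subset K_n^\circ$, $\bigcup K_n^\circ\supset\bigcup L_n^\circ=M$, $M^*\setminus K_n$ connected, and $E\cap K_n\subset K_n^\circ$ at every stage — i.e.\ a regular exhaustion open compatible with $E$.

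**Main obstacle.** The delicate point is the interplay between the two enlargement operations: applying the $K$--$Q$-condition gives a $Q$ with good behavior relative to $E$, but then filling holes to make $M^*\setminus K'$ connected could in principle push new points of $E$ onto $\partial K'$. The resolution is that the $K$--$Q$-condition is phrased for \emph{every} compact set, so one can apply it a second time, to $K'$ (or to a compact set containing $K'$ in its interior), obtaining a still larger $Q'$ with $E\cap Q'\subset (Q')^\circ$, and then fill holes of $Q'$; one checks that hole-filling of a set whose intersection with $E$ already lies in its interior preserves that property, because the filled-in region is open in $M$ and disjoint from $\partial K'$. Keeping this bookkeeping straight — ensuring at each stage that the set is simultaneously hole-free (connected complement in $M^*$) and $E$-interior-capturing, while still growing to exhaust $M$ — is the only real work; everything else is the standard construction of a regular exhaustion.
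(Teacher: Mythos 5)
Your construction is essentially the paper's: apply the open $K$--$Q$-condition recursively along an exhaustion and then fill the holes by discarding the component of $M^*\setminus Q$ containing $*$, observing that $\partial K'\subset\partial Q$ is disjoint from $E$ (because $E\cap Q\subset Q^\circ$) and that the filled-in components are open, so open compatibility survives hole-filling and the resulting exhaustion is regular. The detour through the claim that $E$ meets $K'$ only inside $Q$ (false in general, since $E$ may enter the filled holes) and the proposed second application of the $K$--$Q$-condition to $K'$ are unnecessary; your boundary argument already suffices, exactly as in the paper's proof.
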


\begin{proof}
Let $(K_j)_{j=1}^\infty$ be a regular exhaustion of $M$.  
We shall define recursively an exhaustion  $(Q_{n})_{n=1}^{\infty}$  of $M$ with certain properties. From the $K-Q$ condition, we choose a compact set $Q_1,$ such that $K_1\subset Q_1^\circ$ and $E\cap Q_1\subset Q_1^\circ.$ Now, we may choose a compact set $Q_2,$ such that $K_1\cup Q_1\subset Q_2^\circ$ and $E\cap Q_2\subset Q_2^\circ.$ 
Suppose we have selected compact sets $Q_1,\ldots,Q_{n},$ such that $K_j\cup Q_j\subset Q_{j+1}^\circ$ and $E\cap Q_{j+1}\subset Q_{j+1}^\circ,$ for $j=1,\ldots,n-1.$ We may choose a compact set $Q_{n+1},$ such that $K_n\cup Q_n\subset Q_{n+1}^\circ$ and $E\cap Q_{n+1}\subset Q_{n+1}^\circ.$ Thus,  we have inductively constructed an exhaustion  $(Q_n)_{n=1}^\infty$ such that, for each $n,$ $K_n\cup Q_n\subset Q_{n+1}^\circ$ and $E\cap Q_n\subset Q_n^\circ.$  We denote by $Q_{n,*}^c$ the connected component of $M^*\setminus Q_n$ that contains the point $*$ and put $ L_n=M\setminus   Q_{n,*}^c$. Then,  $(L_{n})_{n=1}^{\infty}$ is a regular exhaustion of $M$ (see  \cite[p. 224]{Na}). Furthermore, for each $n,$ $E\cap\partial Q_n=\emptyset,$ so $E\cap L_n\subset L_n^\circ$. Thus, the exhaustion $(L_n)_{n=1}^\infty$ is  open compatible with $E.$ 
\end{proof}

A closed set $E\subset M$ is said to be a set of Runge-Carleman approximation, if for every open neighbourhood $U$ of $E,$ every  section $f\in C^\infty(U,\mathcal\vartheta),$ with $Pf=0,$ and every positive continuous function $\varepsilon$ on $E,$ there is a global section $u\in C^\infty(M,\mathcal\vartheta),$ with $Pu=0,$   such that $|u-f|<\varepsilon$ on $E.$

\begin{theorem}\label{Runge-Carleman}
Let $E$ be a closed subset of $M$  satisfyting the open $K-Q$ condition, with $M^*\setminus E$ connected.   Then $E$ is a set of Runge-Carleman approximation. 
\end{theorem}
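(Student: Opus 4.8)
The plan is to combine a Runge–Carleman type local-to-global approximation result for the operator $P$ with the exhaustion machinery provided by Lemma \ref{compatible}. First, I would invoke the Malgrange–Lax theorem (as cited from \cite{Na}): on the oriented real analytic manifold $M$, for the elliptic operator $P:\vartheta\to\eta$ between analytic bundles of equal rank, solutions of $Pu=0$ on a neighbourhood of a compact set $K$ with $M^*\setminus K$ connected can be uniformly approximated on $K$ by global solutions. This is the compact Runge theorem; the point of Theorem \ref{Runge-Carleman} is to upgrade it to a Carleman-type statement, where the error is controlled by an arbitrary positive continuous function $\varepsilon$ on the (possibly noncompact) closed set $E$.

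The core of the argument is the standard Carleman fusion scheme. Since $E$ satisfies the open $K$–$Q$ condition, Lemma \ref{compatible} gives a regular exhaustion $(L_n)_{n=1}^\infty$ of $M$ that is open compatible with $E$, i.e. $E\cap L_n\subset L_n^\circ$ for every $n$; in particular $E\cap\partial L_n=\emptyset$, so the sets $E_n := E\cap L_n$ and $E\setminus L_n^\circ$ are disjoint closed sets. Given $f\in C^\infty(U,\vartheta)$ with $Pf=0$ on a neighbourhood $U$ of $E$, and $\varepsilon>0$ continuous on $E$, I would construct a sequence $u_n\in C^\infty(M,\vartheta)$ with $Pu_n=0$ recursively so that (i) $|u_{n}-u_{n-1}|$ is small on $L_{n-1}$, say less than $2^{-n}\inf_{E\cap L_{n-1}}\varepsilon$ or a summable bound thereof, and (ii) $|u_n-f|$ is small on $E\cap L_n$. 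At each step one approximates, on a neighbourhood of the compact set $L_{n-1}\cup(E\cap L_n)$, the section that equals $u_{n-1}$ near $L_{n-1}$ and equals $f$ near $E\cap(L_n\setminus L_{n-1}^\circ)$ — these are solutions of $Pu=0$ on disjoint pieces, so they patch to a solution on a neighbourhood of a compact set whose complement in $M^*$ is connected (this connectivity is where $M^*\setminus E$ connected and the regularity of the exhaustion enter) — and then applies the Malgrange–Lax compact approximation. The limit $u=\lim_n u_n$ exists locally uniformly (hence is a global $C^\infty$ solution of $Pu=0$ by elliptic regularity and the closedness of the solution space under local uniform limits), and the telescoping estimates yield $|u-f|<\varepsilon$ on $E$.

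The main obstacle is ensuring at each stage that the compact set on which one glues $u_{n-1}$ and $f$ has \emph{connected} complement in $M^*$, so that the hypothesis of the compact Runge (Malgrange–Lax) theorem is met; this is exactly what the open $K$–$Q$ condition together with $M^*\setminus E$ connected is designed to guarantee, and it must be checked that the components of $M^*\setminus L_n$ meeting $E$ can be absorbed without destroying this property. A secondary technical point is the passage from "approximation on the compact exhausting sets" to "approximation with a prescribed continuous weight $\varepsilon$ on all of $E$": one handles this by replacing $\varepsilon$ with a smaller positive continuous function that is constant (and rapidly decreasing) on the shells $L_n\setminus L_{n-1}^\circ$, reducing the Carleman estimate to countably many compact estimates. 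Once these two points are in place, the telescoping sum argument is routine.
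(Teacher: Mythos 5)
Your proposal is correct and follows essentially the same route as the paper: a regular exhaustion open compatible with $E$ from Lemma \ref{compatible}, a recursive fusion step gluing the previous global solution near $L_{n-1}$ with $f$ near $E\cap(L_n\setminus L_{n-1})$ on disjoint open sets whose complement has no compact components (using $M^*\setminus E$ connected), Malgrange--Lax at each stage, and a telescoping estimate giving $|u-f|<\varepsilon$ on $E$. The minor variations (your weight $2^{-n}\inf_{E\cap L_{n-1}}\varepsilon$ versus the paper's $\min_{L_n}\varepsilon$ after extending $\varepsilon$ to $M$, and your explicit appeal to elliptic regularity for the limit) do not change the argument.
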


\begin{proof}
By lemma \ref{compatible}, let $(L_{n})_{n=1}^{\infty}$ be a regular exhaustion of $M$ which is open compatible with $E$ and  set $L_0=\emptyset$.  Fix an open neighbourhood $U$ of $E,$ and a  section $f\in C^\infty(U,\mathcal\vartheta),$ with $Pf=0$. Consider  $\varepsilon$ a  continuous and positive function on $E,$ which we may assume is continuous and positive on all of $M$ and set 
$$
	\varepsilon_{n}=\min\{\varepsilon(x):x\in L_n\}>0, \quad n=1,2,\ldots.
$$

Now we construct recursively a sequence $(u_n)_{n=0}^\infty$ of sections $u_{n}\in C^\infty(M,\vartheta)$ 
such that $|u_{n}-u_{n-1}|<\varepsilon_{n}/2^{n}$ on $ L_{n-1}$  and $|u_{n}-f|<\varepsilon_{n}/2^{n}$ on $E\cap (L_{n}\setminus  L_{n-1})$. Consider $u_0=0$. For $n=1$ we only need to check the second condition on $u_1$ since the first condition is void. Note that $U_1=L_1^\circ\cap U$ is an open set containing $E\cap L_1$ such that $M\setminus U_1$ has no compact connected components and $Pf=0$ on $U_1$. By the Malgrange-Lax theorem, see \cite{Na}, there exists a section 
$u_{1}\in C^\infty(M,\vartheta)$ with $Pu_{1}=0,$ such that $|u_1-f|<\varepsilon_{1}/2$ on $E\cap  L_{1}.$

Assume now that we have fixed $(u_{n})_{n=1}^{N-1}$ satisfying the required conditions.
Consider two  open sets $V_N, W_N$ such that 
\begin{align*}
&E\cap (L_N\setminus L_{N-1})\subset V_N\subset (U\cap (L_N\setminus L_{N-1})),&\\
&L_{N-1}\subset W_N,&\\
&V_N\cap W_N=\emptyset.&
\end{align*}
Since $M^*\setminus E$ is connected and $(L_n)_{n=1}^\infty$ is a regular exhaustion, without loss of generality, we can assume that $M\setminus (V_N\cup W_N)$ has no compact connected components.

 Define $g\in C^\infty(G,\vartheta),$ by putting $g=u_{N-1}$ on $ W_{N}$ and $g=f$ on $V_{N}$. Set 
$$
	K=L_{N-1}\cup\left(E \cap (L_N\setminus L_{N-1})\right)
$$
and $U_N=(V_N\cup W_n)\cap U$. Then, $Pg=0$ on $U_N$ and $M\setminus U_N$ has no compact connected components. By the Malgrange-Lax theorem again, there exists a section $u_{N}\in C^\infty(M,\vartheta)$ with $Pu_{N}=0,$ such that 
$$
	\max_{x\in K}|u_{N}(x)-g(x)|< \frac{\varepsilon_N}{2^{N}}.
$$
The section $u_N$ has the required properties, which completes the inductive construction of the sequence $(u_n)_{n=0}^\infty.$  

For every $x\in M,$ the sequence $\{u_n(x)\}_{n=0}^\infty$ is Cauchy and hence $u$ converges pointwise to a section $\vartheta.$ Let $u(x)=\lim_{n\to\infty}u_{n}(x)$ for every $x\in M$. 
Since for every natural number $j,$ the sequence  $(u_{n})_{n=j}^{\infty}$ converges to $u$ in $C^\infty(L_j^\circ,\vartheta)$ and $Pu_{n}=0$ on $M,$ we have that $u\in C^\infty(L_j^\circ,\vartheta)$ and also $Pu=0$ on $L_j^\circ.$  Since this holds for every $j=1,2,\ldots,$ we have that $u\in C^\infty(M,\vartheta)$ and $Pu=0.$ 

To finish we show that $|u(x)-f(x)|\le \varepsilon(x)$ on $E$. Fix $x\in E.$ Then, there exists a unique natural number $n=n(x),$ such that $x\in L_n\setminus L_{n-1}.$ We have
\begin{align*}
|u(x)-f(x)|&\le |u(x)-u_n(x)|+|u_n(x)-f(x)|\\
&\le \left( \sum_{k=n}^\infty|u_{k+1}(x)-u_k(x)|\right)+\frac{\varepsilon_n}{2^n} \le 
\sum_{k=n}^\infty\frac{\varepsilon_k}{2^k} < 
\frac{\epsilon_n}{2^{n-1}} \le \frac{\varepsilon(x)}{2^{n-1}}\le \varepsilon(x).
\end{align*}	
\end{proof}

\begin{corollary}\label{locally finite}

Let $E$ be a subset of $M$ that is a union of a locally finite family of disjoint continua and suppose that  $M^*\setminus E$ is connected.  Then $E$ is a set of Runge-Carleman approximation. 
\end{corollary}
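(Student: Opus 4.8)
The plan is to deduce this from Theorem \ref{Runge-Carleman}, so the task reduces to verifying that a closed set $E$ which is a locally finite union of disjoint continua, with $M^*\setminus E$ connected, satisfies the open $K-Q$ condition. Recall that $M^*\setminus E$ connected is already a hypothesis, so the only thing to check is the $K-Q$ condition itself. First I would observe that such an $E$ is indeed closed in $M$: a locally finite union of closed sets is closed, and continua are compact, hence closed. Write $E=\bigcup_{\iota} C_\iota$ with the $C_\iota$ disjoint continua forming a locally finite family.

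Next, fix an arbitrary compact $K\subset M$. Since the family $\{C_\iota\}$ is locally finite, each point of the compact set $K$ has a neighbourhood meeting only finitely many $C_\iota$; by compactness only finitely many continua, say $C_{\iota_1},\dots,C_{\iota_m}$, meet $K$. More is true: one can enlarge $K$ slightly to a compact neighbourhood $K'$ of $K$ still meeting only finitely many continua (again using local finiteness plus compactness of $K$). Now I would choose a compact set $Q$ with $K\subset Q^\circ$ that \emph{swallows} each of the finitely many continua that come near $K$: since each $C_{\iota_j}$ is itself compact, the set $K'\cup C_{\iota_1}\cup\dots\cup C_{\iota_m}$ is compact, and on an (oriented, second countable, hence $\sigma$-compact) manifold $M$ every compact set is contained in the interior of a compact set, so there is a compact $Q$ with $K'\cup C_{\iota_1}\cup\dots\cup C_{\iota_m}\subset Q^\circ$. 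By construction $K\subset K'\subset Q^\circ$.

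It remains to check $E\cap Q\subset Q^\circ$. A point $x\in E\cap Q$ lies in some continuum $C_\iota$. If $C_\iota$ is one of $C_{\iota_1},\dots,C_{\iota_m}$, then $C_\iota\subset Q^\circ$ by choice of $Q$, so $x\in Q^\circ$. If $C_\iota$ is none of these, then $C_\iota\cap K'=\emptyset$; but a priori $C_\iota$ could still poke into $Q\setminus K'$. To handle this I would choose $Q$ more carefully from the start: take $Q$ to be the closure of an open, relatively compact neighbourhood $Q^\circ$ of $K'\cup C_{\iota_1}\cup\dots\cup C_{\iota_m}$ chosen so small that $Q$ meets only the continua $C_{\iota_1},\dots,C_{\iota_m}$ — possible because $K'\cup C_{\iota_1}\cup\dots\cup C_{\iota_m}$ is a compact set and the family is locally finite, so a sufficiently tight relatively compact neighbourhood of it meets only finitely many $C_\iota$, and by shrinking we keep only those that already intersect $K'\cup\bigcup_j C_{\iota_j}$, namely $C_{\iota_1},\dots,C_{\iota_m}$ themselves (here disjointness of the continua is used: a continuum meeting the neighbourhood but not $K'\cup\bigcup_j C_{\iota_j}$ can be excluded by taking the neighbourhood inside the complement of the finitely many continua that are ``new''). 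Then every $x\in E\cap Q$ lies in some $C_{\iota_j}\subset Q^\circ$, so $E\cap Q\subset Q^\circ$, as required. Hence $E$ satisfies the open $K-Q$ condition and, together with the hypothesis that $M^*\setminus E$ is connected, Theorem \ref{Runge-Carleman} applies to give that $E$ is a set of Runge-Carleman approximation.

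The main obstacle is the bookkeeping in the last step: one must choose $Q$ so that it engulfs entirely each continuum it touches, rather than merely clipping it, and this requires combining local finiteness, compactness of the individual continua, and their pairwise disjointness in the right order. The cleanest route is probably to first fix the finite list of ``relevant'' continua (those meeting a compact neighbourhood $K'$ of $K$), form the compact set consisting of $K'$ together with all of them, and only then invoke $\sigma$-compactness of $M$ to place this compact set inside $Q^\circ$ for a compact $Q$ chosen tight enough to avoid all the remaining continua.
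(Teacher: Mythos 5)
Your proposal is correct and follows essentially the same route as the paper: reduce to verifying the open $K-Q$ condition, use local finiteness to isolate the finitely many continua meeting $K$, enclose $K$ together with these continua in the interior of a compact $Q$ chosen disjoint from the remaining continua, and then invoke Theorem \ref{Runge-Carleman}. The paper streamlines your final bookkeeping by simply noting that the union of the remaining components is closed (again by local finiteness), so a compact neighbourhood of $K\cup E_1\cup\cdots\cup E_m$ disjoint from it exists at once.
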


\begin{proof}
Notice that $E$ is closed, since it is the union of a locally finite family of closed sets. 
We only need to show that $E$ satisfies the open $K-Q$ condition. For this, fix a compact set $K$ in $M$. We denote the connected components of $E$ by $E_j$  and we may assume that they are ordered so that $E_1, \cdots, E_m$ are the connected components which meet $K.$ Set $L = K\cup E_1\cup\cdots\cup E_m$ and let $Q$ be a compact neighbourhood of $L$ disjoint from the closed set $E_{m+1}\cup E_{m+2}\cup\cdots.$ Then $Q$ satisfies the required conditions.  
\end{proof}

%%%%%%%%%%%%%%%%%%%%%%%%%%%%%%%%%%%%%%%%%%%%%%%%%%%

\section{Proof of Theorem \ref{bundles}}

\begin{lemma}\label{bundleQK}
Let $U$ be a proper open subset of a  manifold $M$ and $Q$ and  $K$ be disjoint   compact subsets of $\partial U.$ Then, for each $\varepsilon>0,$ there exists $\delta>0$ and an open set $V_\delta$ that is a $\delta$-neighbourhood 
 of $K$ in $M$ disjoint from $Q$ such that 
\[
	\frac{\mu\big(B(p,r)\cap U\cap V_\delta\big)}{\mu\big(B(p,r)\cap U\big)} < \varepsilon, \quad 
	\forall p\in Q, \quad \forall r>0.
\]

Furthermore,  $\delta$ can be chosen so that  $B(p,r)\cap V_\delta=\emptyset$ for all $p\in Q$ and $r<\delta$.
\end{lemma}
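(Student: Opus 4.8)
The plan is to take for $V_\delta$ the open $\delta$-neighbourhood $V_\delta=\{x\in M:d(x,K)<\delta\}$ of $K$ and to choose $\delta$ small enough at the end. Set $d_0=\inf\{d(q,k):q\in Q,\ k\in K\}$, which is strictly positive because $Q$ and $K$ are disjoint compact sets. As soon as $\delta<d_0$ the set $V_\delta$ is disjoint from $Q$, and throughout I keep $\delta\le d_0/2$. First I dispose of the ``furthermore'' clause and, more generally, of all radii $r\le d_0/2$: if $p\in Q$, $r\le d_0/2$, and $x\in B(p,r)$, then $d(x,K)\ge d(p,K)-d(p,x)>d_0-d_0/2\ge\delta$, so $B(p,r)\cap V_\delta=\emptyset$ and the quotient in the statement equals $0$. (It is legitimate to speak of the quotient: since $p\in\partial U\subset\overline U$, the set $B(p,r)\cap U$ is nonempty and open, hence has positive $\mu$-measure by hypothesis on $\mu$.) So the whole problem reduces to the range $r>d_0/2$.

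For those $r$ I will bound the denominator below and the numerator above, uniformly in $p\in Q$. The denominator: since $B(p,r)\cap U\supset B(p,d_0/2)\cap U$, it suffices to prove that
$$
	c_0:=\inf_{p\in Q}\mu\big(B(p,d_0/2)\cap U\big)>0 .
$$
I claim that $p\mapsto\mu(B(p,d_0/2)\cap U)$ is lower semicontinuous on $M$: if $p_n\to p$ and $\rho<d_0/2$, then $B(p_n,d_0/2)\supset B(p,\rho)$ once $d(p,p_n)<d_0/2-\rho$, so $\liminf_n\mu(B(p_n,d_0/2)\cap U)\ge\mu(B(p,\rho)\cap U)$; letting $\rho\uparrow d_0/2$ and invoking continuity from below of $\mu$ yields the claim. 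This function is also strictly positive, because for each $p\in Q\subset\partial U$ the set $B(p,d_0/2)\cap U$ is nonempty and open. A strictly positive lower semicontinuous function on the compact set $Q$ attains a positive minimum, which is the wanted $c_0$.

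The numerator: $\mu(B(p,r)\cap U\cap V_\delta)\le\mu(V_\delta\cap U)$ for all $p$ and $r$. Since $K$ is compact, for all sufficiently small $\delta$ the set $V_\delta$ is relatively compact, hence $\mu(V_\delta\cap U)<\infty$; and as $\delta\downarrow 0$ the sets $V_\delta\cap U$ decrease to $\big(\bigcap_{\delta>0}V_\delta\big)\cap U=K\cap U=\emptyset$, so $\mu(V_\delta\cap U)\to 0$ by continuity from above. Fix $\delta\le d_0/2$ small enough that $\mu(V_\delta\cap U)<\varepsilon\,c_0$. Then for every $p\in Q$ the quotient is $0$ if $r\le d_0/2$ and at most $\mu(V_\delta\cap U)/c_0<\varepsilon$ if $r>d_0/2$, which is what we want.

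The only ingredient that goes beyond the triangle inequality and elementary measure theory is the uniform positive lower bound $c_0$ for $\mu(B(p,d_0/2)\cap U)$ over $p\in Q$, and I expect that to be the main point; it rests on the compactness of $Q$ together with the standing hypothesis that $\mu$ charges every nonempty open subset of $M$ positively. Without compactness of $Q$ this infimum could degenerate to $0$, and the lemma would fail.
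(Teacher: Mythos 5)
Your proof is correct and takes essentially the same route as the paper's: for $r\le d(Q,K)/2$ the ball $B(p,r)$ misses $V_\delta$ by the triangle inequality, and for larger $r$ you bound the denominator below by a uniform constant $c_0>0$ coming from compactness of $Q$ and positivity of $\mu$ on nonempty open sets (the paper obtains the same bound by a contradiction argument rather than lower semicontinuity) and then choose $\delta$ with $\mu(U\cap V_\delta)<\varepsilon c_0$. Your justification that $\mu(U\cap V_\delta)\to 0$, via relative compactness of small neighbourhoods of $K$ (which implicitly uses local compactness of $M$) together with continuity from above, merely fills in a step the paper leaves unspoken (there it follows from the regularity of $\mu$), so the two arguments differ only cosmetically.
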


\begin{proof}
Set  $r_0=d(Q,K)/2>0$. We claim that 
\begin{equation}
\label{minmupositive}
	\rho:=\min_{p\in Q}\mu\big(B(p,r_0)\cap U\big)>0.
\end{equation}

Assume this were not the case and we have that $\rho=0$. Then, by the compactness of $Q$, we could find a sequence of points $(p_n)_{n=1}^\infty \subset Q$ convergent to a point $p_0\in Q$ so that 
$\mu\big(B(p_n,r_0)\cap U\big)<1/n$ and $d(p_n,p_0)<r_0/2$. But then we would have that $\mu\big(B(p_0,r_0/2)\cap U\big)\leq \mu\big(B(p_n,r_0)\cap U\big)<1/n$ for every natural number $n$. Hence $\mu\big(B(p_0,r_0/2)\cap U\big)=0$ contradicting the fact that $\mu$ has positive measure on open sets. Thus equation \eqref{minmupositive} holds.

Consider $V_\delta$ a $\delta$-neighbourhood  of $K$ in $M$ with $\delta<r_0$ and $\mu(U\cap V_\delta)<\varepsilon \rho$. 
It is clear that for such $\delta$  we have that $V_\delta$ and $Q$ are disjoint and  $B(p,r)\cap V_\delta=\emptyset,$ for all $p\in Q$ and all $r<r_0.$ Thus, the last statement of the lemma is obvious by choosing $\delta=r_0$.  Also, for any $r\geq r_0$ we have that 
\[
	\frac{\mu\big(B(p,r)\cap U\cap V_\delta\big)}{\mu\big(B(p,r)\cap U\big)}\leq  \frac{\mu(U\cap V_\delta)}{\mu\big(B(p,r)\cap U\big)} < \frac{\varepsilon\rho}{\mu\big(B(p,r)\cap U\big)}\leq \varepsilon.
\]

\end{proof}

The following lemma was stated in  \cite[Lemma 4]{FG2} for volume measure on a Riemannian manifold, but the same proof yields the following more general version. 

\begin{lemma}
\label{smallremoval}
Let $U$ be a proper open subset of a  manifold $M$ and $C$ a  connected  compact subset of $U$ with $\mu(C)=0.$ Then, for each   $\epsilon>0$  there is a connected open neighbourhood $R$ of $C$ in $U$ such that 
\[
	\frac{\mu\big(R\cap  B(p,r)\big)}{\mu\big(U\cap B(p,r)\big)} < \epsilon, 
\]
for every $p\in \partial U$  and every $r>0.$
\end{lemma}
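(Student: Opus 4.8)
The plan is to construct $R$ as a sufficiently thin connected open neighbourhood of $C$, extracted from the outer regularity of $\mu$ together with the hypothesis $\mu(C)=0$, and then to establish the density estimate by a case analysis on the radius $r$.

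To build $R$, I would first pass to a relatively compact reference neighbourhood of $C$. Local compactness of $M$ gives a compact $Q$ with $C\subset Q^\circ\subset Q\subset U$; setting $2r_0:=\min\{d(C,M\setminus Q),d(C,M\setminus U)\}>0$ and $W:=\{x\in M:d(x,C)<r_0\}$, the closure $\overline W$ is contained in $\{x:d(x,C)\le r_0\}\subset Q\cap U$, hence is compact and lies in $U$; moreover $d(x,M\setminus U)>r_0$ for every $x\in\overline W$, and $d(p,C)\ge 2r_0$ for every $p\in\partial U$. Now fix $\delta:=r_0/4$. Given $\eta>0$ (to be chosen later), outer regularity of $\mu$ (which is regular, by hypothesis, and satisfies $\mu(C)=0$) furnishes an open set $O\supset C$ with $\mu(O)<\eta$, and I take $R$ to be the connected component containing $C$ of the open set $O\cap\{x:d(x,C)<\delta\}$. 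This $R$ is open (components of open subsets of a manifold are open), connected, contains $C$, lies in $\{x:d(x,C)<\delta\}\subset W\subset U$, and satisfies $\mu(R)<\eta$.

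For the estimate, fix $p\in\partial U$ and $r>0$; we may assume $B(p,r)\cap R\neq\emptyset$, otherwise the quotient is $0$. I would separate three cases. \textbf{Case 1: $r\le r_0$.} Then any $x\in B(p,r)$ has $d(x,C)\ge d(p,C)-d(p,x)>2r_0-r_0=r_0>\delta$, so $B(p,r)$ is disjoint from $\{x:d(x,C)<\delta\}\supset R$, contrary to assumption; so this case is vacuous. \textbf{Case 2: $\overline W\subset B(p,r)$.} Then $U\cap B(p,r)\supset W$, so $\mu(U\cap B(p,r))\ge\mu(W)=:\rho_W>0$ and the quotient is $\le\mu(R)/\rho_W<\eta/\rho_W$. \textbf{Case 3: $r>r_0$ and $\overline W\not\subset B(p,r)$.} This is the essential case, in which $B(p,r)$ only grazes $W$ near $C$. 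Choosing $x\in R\cap B(p,r)$ and $c_x\in C$ with $d(x,c_x)<\delta$, we have $B(c_x,r_0)\subset U$, $d(p,c_x)\le d(p,x)+\delta<r+\delta$, while $d(p,c_x)\ge 2r_0$; thus $B(p,r)$ and $B(c_x,r_0)$ contain the common point $x$, and the sum of their radii exceeds the distance between their centres by at least $r_0-\delta=3r_0/4$. The claim I would then prove is that this forces $\mu\bigl(B(p,r)\cap B(c_x,r_0)\bigr)\ge\rho^*$ for a constant $\rho^*>0$ depending only on $C,U,\mu,d$; since $B(c_x,r_0)\subset U$, this gives $\mu(U\cap B(p,r))\ge\rho^*$ and hence quotient $\le\mu(R)/\rho^*<\eta/\rho^*$. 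Finally one chooses $\eta<\epsilon\min\{\rho_W,\rho^*\}$, and the quotient is $<\epsilon$ in all three cases.

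The hard part is the lower bound $\mu\bigl(B(p,r)\cap B(c_x,r_0)\bigr)\ge\rho^*$ of Case 3: one must bound below, uniformly in $p\in\partial U$ and $r>r_0$, the measure of the lens cut from a ball of fixed radius $r_0$ by a ball of possibly enormous radius whose centre lies within $r+\delta$ of $c_x$. In Euclidean space this is elementary. On the manifold I would exploit that $B(c_x,r_0)$ lies in the relatively compact set $\overline W$: cover $\overline W$ by finitely many coordinate charts on which $d$ is bi-Lipschitz equivalent to the Euclidean metric and $\mu$ is two-sided comparable to Lebesgue measure, with constants uniform over $\overline W$, and reduce the lens estimate to its Euclidean counterpart. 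This local comparability is automatic in the settings that matter (Riemannian distance and volume; the pull-back of Euclidean data under an immersion $\pi\colon M\to\C$), and it is precisely here that the argument of \cite[Lemma 4]{FG2} for volume measure on a Riemannian manifold transfers. Equivalently, the whole lemma reduces to showing $T:=\inf\{\mu(U\cap B(p,r)):p\in\partial U,\,r>0,\,B(p,r)\cap\{x:d(x,C)<\delta\}\neq\emptyset\}>0$ and then taking $\mu(R)<\epsilon\,T$; positivity of $T$ is obtained by a compactness argument in $\overline W$, the grazing configuration being the only point requiring the lens estimate.
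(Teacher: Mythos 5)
The paper does not actually reprove this lemma (it invokes \cite[Lemma 4]{FG2} and asserts the same proof extends), so your attempt can only be judged on its own terms; the overall architecture you propose (shrink to a thin connected neighbourhood $R$ of $C$ with $\mu(R)$ small by regularity, observe that a ball $B(p,r)$ with $p\in\partial U$ can only meet $R$ when $r$ exceeds a fixed $r_0$, and then beat the denominator by a uniform lower bound $T=\inf\mu\big(U\cap B(p,r)\big)>0$ over the relevant configurations) is the natural one. The genuine gap is exactly the step you yourself flag as ``the hard part'': the uniform lower bound $\mu\big(B(p,r)\cap B(c_x,r_0)\big)\ge\rho^*$ in Case 3, equivalently the positivity of $T$. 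Your proposed justification --- cover $\overline{W}$ by charts in which $d$ is bi-Lipschitz to the Euclidean metric and $\mu$ is two-sidedly comparable to Lebesgue measure --- imports hypotheses the paper does not grant ($d$ is merely ``a distance function'' and $\mu$ is only assumed positive on open sets, finite on compacta and absolutely continuous, which gives no lower density bound), and, more importantly, it would not suffice even if granted. The centre $p$ lies outside $\overline{W}$, possibly far outside, so all that local bi-Lipschitz control gives you about $B(p,r)\cap\overline{W}$ is that it is a sublevel set of a Lipschitz function containing the grazing point $x$ with value $d(p,x)<r$; since $r-d(p,x)$ can be arbitrarily small, this only yields a ball of uncontrollably small radius inside the intersection, and no Euclidean ``lens'' estimate follows. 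Deep numerical overlap of two metric balls sharing one point does not bound the measure of their intersection from below in a general metric space: one can build compatible metrics (e.g.\ ``shortcut'' metrics joining a far boundary point to a point of $C$ at distance exactly $2r_0$, Euclidean at small scales) in which $B(p,r)\cap B(c_x,r_0)$ is an arbitrarily small ball around $x$, so the constant $\rho^*$ you posit simply does not exist at this level of generality.

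What the Euclidean computation really uses is the ability to march from the grazing point $x$ towards $p$: a point $y$ with $d(x,y)\le t$ and $d(p,y)\le d(p,x)-t$ gives $B(y,t)\subset B(p,r)\cap B(c_x,r_0)\subset U\cap B(p,r)$ with $t$ a fixed fraction of $r_0$ and $y$ in the fixed compactum $\overline{W}$, after which the covering/compactness argument (which, incidentally, needs only positivity of $\mu$ on open sets, not comparability with Lebesgue) finishes the proof. Such intermediate points exist when $d$ is a geodesic or length metric --- in particular for the Riemannian distance of \cite{FG2} and for the settings of the corollaries --- but their existence is exactly the unproved content of your Case 3, and your chart-by-chart reduction does not supply it. To complete the argument you must either restrict $d$ (length/Riemannian-type metrics, or at least a quantitative ``penetration'' property of balls centred on $\partial U$ relative to $\mu$ near $C$) and prove the marching step there, or find a substitute for the lens estimate; as written, the inequality on which all three cases of your proof ultimately rest is asserted rather than proved.
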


We recall that an open subset $W$ of a real manifold $M$  is an {\it open  parametric ball} if there is a chart $\varphi:W\to\B,$ where $\B$ is an open ball in the  Euclidean space and $\varphi(W)=\B$. A subset $H\subset M$ is a {\it closed parametric ball}, if there is a parametric ball $\varphi:W\to\B$ and a closed ball $\overline B\subset\B,$ such that $H=\varphi^{-1}(\overline B).$

\begin{lemma}\label{continuous}
Under the hypotheses of Theorem \ref{bundles}, 
there exists a set $F,$ with $S\subset F\subset  \partial U$ and $\nu(\partial U\setminus F)=0,$ and 
$u\in C(U,\xi)$ on $U,$  such that, for every $p\in F$, $u(x)\to \varphi(p)$, as $x\to p$ outside a set of $\mu$-density 0 at $p$ relative to $U.$  
\end{lemma}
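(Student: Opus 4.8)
The plan is to reduce the datum $\varphi$ to countably many compact pieces on which it is continuous (keeping $S$ apart), then to build $u$ as a function that is locally constant on a ``fat, thin‑complemented'' cellular decomposition of $U$ near $\partial U$, reading off the value on each cell from the coarsest Lusin piece that lies close to it, and finally to extract the asymptotic values, using Lemma \ref{smallremoval} and Lemma \ref{bundleQK} to make the exceptional set $\mu$‑negligible at every boundary point of $F$. Concretely, since $M$ is metrizable and $S,\partial U$ are closed, Tietze's theorem (applied coordinatewise to the trivial bundle $\vartheta$) extends $\varphi|_S$ to $h\in C(M,\vartheta)$; the closed sets $C_j=\{p\in\partial U:d(p,S)\ge 1/j\}$ exhaust $\partial U\setminus S$ with $d(C_j,S)\ge 1/j$; and Lusin's theorem applied to $\varphi$ on $\partial U\setminus S$ with the regular $\sigma$‑finite measure $\nu$ — followed by the standard trick of shaving off small neighbourhoods to disjointify and then nest — yields an increasing sequence of compact sets $K_1\subset K_2\subset\cdots\subset\partial U\setminus S$, each $K_n\subset C_n$, with $\varphi|_{K_n}$ continuous and $\nu\big((\partial U\setminus S)\setminus\bigcup_n K_n\big)=0$. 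Set $F=S\cup\bigcup_n K_n$, so $\nu(\partial U\setminus F)=0$; extend each $\varphi|_{K_n}$ by Tietze to $h_n\in C(M,\vartheta)$.

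For the construction of $u$, fix $\epsilon\in(0,1)$ and choose a locally finite family $\{E_\alpha\}$ of pairwise disjoint closed subsets of $U$, accumulating only on $\partial U$, such that each $E_\alpha$ has ``depth'' comparable to its distance $\rho_\alpha$ to $\partial U$ and ``tangential size'' $w_\alpha$ comparable to $\rho_\alpha^\epsilon$ (hence $w_\alpha\to 0$ and $w_\alpha\gg\rho_\alpha$), and such that $G:=U\setminus\bigcup_\alpha E_\alpha$ is a neighbourhood of a locally finite family of disjoint $\mu$‑null continua (the ``grid walls'' separating the cells). Applying Lemma \ref{smallremoval} to each of these continua with a summable sequence of tolerances $\varepsilon_i$ and then summing — each term tends to $0$ as $r\to 0$ and is dominated by $\varepsilon_i$, so the series does too — shows that $G$ has $\mu$‑density $0$ at every point of $\partial U$. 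To each cell attach the index $n(\alpha)=\min\{n:d(E_\alpha,K_n)\le w_\alpha\}$ and a point $q_\alpha\in K_{n(\alpha)}$ with $d(q_\alpha,E_\alpha)\le w_\alpha$ (if no such $n$ exists, $q_\alpha$ is arbitrary and $E_\alpha$ is absorbed into the exceptional set); for cells near $S$ instead choose $q_\alpha\in S$, which is legitimate since $S$ is positively separated from each $K_n$ inside $C_n$. Define $u\equiv\varphi(q_\alpha)$ on each $E_\alpha$; this is continuous on $\bigcup_\alpha E_\alpha$ because the cells are mutually separated by $G$, and then extend $u$ to $C(U,\vartheta)$ by Tietze.

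To verify the asymptotic values: if $p\in S$, then for $x$ near $p$ only the cells near $S$ are present and there $u$ agrees with (a Tietze extension of) $\varphi|_S$, so $u(x)\to\varphi(p)$ with empty exceptional set. If $p\in K_m$ with $m$ minimal, consider $x\to p$ with $d(x,\partial U)\ge d(x,p)^{1/\epsilon}$ and $x\in E_\alpha$: then for $x$ close to $p$ one has $d(E_\alpha,p)\le d(x,p)\le C\,d(x,\partial U)^{\epsilon}\le C\,w_\alpha$, so $n(\alpha)\le m$, whence $q_\alpha\in K_{n(\alpha)}\subset K_m$ and $d(q_\alpha,p)\le C\,w_\alpha\to 0$; continuity of $\varphi|_{K_m}$ gives $u(x)=\varphi(q_\alpha)\to\varphi(p)$. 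Thus the exceptional set at $p$ is contained in $D_p:=G\cup\{x\in U:d(x,\partial U)<d(x,p)^{1/\epsilon}\}$. The first summand is $\mu$‑negligible at $p$ by the above; the second is a thin collar $\{x\in B(p,r)\cap U:d(x,\partial U)<r^{1/\epsilon}\}$, and the gain $r^{1/\epsilon}$ versus $r$ makes it $\mu$‑negligible at $p$ — for general $U$ this is where Lemma \ref{bundleQK} is used, applied (along a $\sigma$‑compact exhaustion) with $Q$ a small compact neighbourhood of $p$ in $\partial U$ and $K$ the complementary part of $\partial U$, to control the portion of this collar sitting near the far boundary, while the portion near $Q$ is small by the collar thickness together with the regularity and absolute continuity of $\mu$. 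Hence $D_p$ has $\mu$‑density $0$ at $p$, and the same estimate shows the cells built around $S$ do not spoil the density‑$1$ conclusion at points of $\bigcup_n K_n$, nor conversely.

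The step I expect to be the main obstacle is precisely this last density estimate, required \emph{at every} $p\in F$ rather than merely $\nu$‑almost everywhere: when $\partial U$ is arbitrary — non‑rectifiable, or even of positive $\mu$‑measure — a naive collar adapted only to the Euclidean distance function need not be $\mu$‑negligible, so the cell geometry near $\partial U$ must be governed by $\mu$ itself, which is exactly what Lemma \ref{bundleQK} makes possible (it measures a neighbourhood of one boundary compact as seen from a disjoint one). Getting the summable‑tolerance bookkeeping of the countably many thin sets $G$, the collars, and the ``$S$ versus $K_n$'' cross terms to close simultaneously with the choice of the Lusin pieces $K_n$ — so that the whole exceptional set at each $p\in F$ is a countable union of sets each dominated by a term of a convergent series and each of density $0$ at $p$ — is the delicate part of the argument.
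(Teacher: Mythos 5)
Your Lusin-type decomposition of $\varphi$ into compacta $K_n$ (kept away from $S$) matches the first half of the paper's argument, but the density verification at the heart of the lemma does not go through. Your exceptional set at $p$ contains the collar $\{x\in B(p,r)\cap U:\ d(x,\partial U)<d(x,p)^{1/\epsilon}\}$, and you need this to be $\mu$-negligible relative to $B(p,r)\cap U$ at \emph{every} $p\in F$ (and $\nu$ is arbitrary, so essentially no boundary point may be discarded). For an arbitrary open $U$ this is false: take $M=\R^2$ with Lebesgue measure, $p=0$, and let $U$ coincide near $p$ with the exponentially thin cusp $\{(s,t):\ 0<s<1,\ 0<t<e^{-1/s}\}$. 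Every point of $B(p,r)\cap U$ then lies within $e^{-1/r}$ of $\partial U$, which is smaller than $r^{1/\epsilon}$ for small $r$, so your collar is all of $B(p,r)\cap U$ and its relative density is $1$, not $0$. Lemma \ref{bundleQK} cannot repair this: it estimates a $\delta$-neighbourhood of a compact $K\subset\partial U$ only at points of a compact $Q\subset\partial U$ \emph{disjoint} from $K$ (its proof hinges on $\min_{p\in Q}\mu(B(p,r_0)\cap U)>0$ with $r_0=d(Q,K)/2$), so it says nothing about the thin collar over the part of the boundary adjacent to $p$ itself, which is exactly the portion you need to control. The same defect undercuts your claim that at $p\in S$ the exceptional set is empty: cells approaching $p$ tangentially (inside the collar) may carry values $\varphi(q_\alpha)$ with $q_\alpha\in K_n$, $n$ large, and these need not converge to $\varphi(p)$, since $\varphi$ is merely measurable on $\partial U$.

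The paper's proof avoids any collar of $\partial U$. After writing $F=S\,\dot\cup\,(\dot\cup_n Q_n)$, it constructs $u$ by iterated Tietze extensions on increasing sets $E_n=U\setminus\bigcup_{l>n+1}V_{n,l}$, where each $V_{n,l}$ is a small neighbourhood of the \emph{later} compactum $Q_l$, chosen via Lemma \ref{bundleQK} so that its relative $\mu$-measure at every point of $\bigcup_{k\le n}(S_k\cup Q_k)$ is less than $2^{-l}$ for $r\le 1$ and equal to $0$ for $r<\delta_{n,l}$. The exceptional set at $p\in F_n$ is then $U\setminus E_n=\bigcup_{l>n+1}V_{n,l}$, a union of sets each disjoint from a fixed compact boundary piece containing $p$ — precisely the situation Lemma \ref{bundleQK} handles — and its $\mu$-density at $p$ vanishes by summing the $2^{-l}$ tail, with no geometric hypothesis on $\partial U$ whatsoever. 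If you wish to keep your cell/grid construction (which resembles the chaplet argument the paper uses later, in the proof of Theorem \ref{bundles}), the assignment of values to cells must be governed by such $\mu$-controlled neighbourhoods of the other Lusin pieces rather than by the distance to $\partial U$; as written, the collar estimate is the missing and unprovable step.
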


\begin{proof}
We start by showing that there exists a subset $F\subset\partial U$ containing $S$ of the form 
$$
	F=S\dot\cup\left(\dot\cup_{n=1}^\infty Q_n\right),
$$ 
with $Q_n$ compact so that the restriction of $\varphi$ is continuous on $Q_n$ and $\nu(\partial U\setminus F)=0$. 

First we assume that $\nu(\partial U\setminus S)<+\infty$. 
By Lusin's theorem (see \cite{H} and \cite[Theorem 2]{W}), there exists a compact set $Q_1$  in $\partial U\setminus S$ such that $\nu\left((\partial U\setminus S)\setminus Q_1\right)<2^{-1}$ and the restriction of $\varphi$ to $Q_1$ is continuous. Now, again by Lusin's theorem, we can find a compact set $Q_2$ in $(\partial U\setminus S)\setminus Q_1$ with $\nu(\big((\partial U\setminus S)\setminus Q_1)\setminus Q_2\big)<2^{-2}$ so that the restriction of $\varphi$ to $Q_1\dot\cup Q_2$ is continuous. By induction we can construct a sequence of compact sets $(Q_n)_{n=1}^\infty$ so that $Q_{n}\subset (\partial U\setminus S)\setminus \cup_{j=1}^{n-1} Q_j$, $\nu((\partial U\setminus S)\setminus \cup_{j=1}^{n} Q_j)<2^{-n}$ and 
the restriction of
$\varphi$ to $Q_1\dot\cup\cdots\dot\cup Q_n$ is continuous for $n=1,2,3,\ldots$. We set 
\[
	F=S\dot\cup\left(\dot\cup_{n=1}^\infty Q_n\right).
\] 
It is obvious that $(Q_n)_{n=1}^\infty$ is a family of pairwise disjoint compact sets and $\nu(\partial U\setminus F)=0$.
 
If $\nu(\partial U\setminus S)$ is not finite, by the $\sigma$-finiteness of the measure $\nu$, there exists a pairwise disjoint sequence of measurable sets $(R_l)_{l=1}^\infty$ with $\nu(R_l)<+\infty$ and 
$\partial U\setminus S=\dot{\bigcup}_{l=1}^\infty R_l$. By the previous argument applied to the section $\varphi$ restricted to the set $R_l$ we can find a pairwise disjoint sequence of compact sets $(Q_{n,l})_{n=1}^\infty$ of $R_l,$ so that the restriction of $\varphi$ is continuous on $Q_{n,l}$ and $\nu(R_l\setminus \dot\cup_{n=1}^\infty Q_{n,l})=0.$  Then,
\[
F=S\dot\cup\left(\dot\cup_{n=1}^\infty \dot\cup_{l=1}^\infty Q_{n,l}\right),
\]
satisfies the desired result.

We now begin to extend the section $\varphi$. For this we shall construct inductively a sequence of increasing sets $(E_n)_{n=1}^\infty$ and a sequence of sections $(f_n)_{n=1}^\infty$. We can write $S=\cup_{n=1}^\infty S_n$ and $F=S\dot\cup\left(\dot\cup_{n=1}^\infty Q_n\right)$, with $S_n$ and $Q_n$ compact, $S_n$ increasing and $Q_n$ pairwise disjoint, so that the restriction $\varphi_n$ of $\varphi$ to $F_n=S\cup Q_1\dot\cup\cdots\dot\cup Q_n$ is continuous.  By Lemma \ref{bundleQK}, for $l=2,3,\ldots,$ there is an open $\delta_{1,l}$-neighbourhood $V_{1,l}$ of $Q_l$ in $M$ such that
\begin{equation}
\label{neigVj}
\begin{aligned}
	\frac{\mu\big(B(p,r)\cap U\cap V_{1,l}\big)}{\mu\big(B(p,r)\cap U\big)} < \frac{1}{2^l}, \quad 
	\forall p\in Q_{1}\cup S_1, \quad \forall r\le 1,\\
\mu\big(B(p,r)\cap U\cap V_{1,l}\big)=0, \quad \forall p\in Q_1 \cup S_1, \quad \forall r<\delta_{1,l}.
\end{aligned}
\end{equation}
%Without loss of generality we can assume that $\delta_{1,l}<1$ for $l=2,3,\ldots$. Also, note that b
By the compactness of $Q_l$, the sets $V_{1,l}$ can be chosen to be a finite union of open parametric  balls in $M$. 
Let
\[
E_1=U\setminus \cup_{l=2}^\infty V_{1,l}.
\]
Since $S\cup Q_1$  is closed in $E_1\cup S\cup Q_1,$ by the Tietze extension theorem we can extend the section $\varphi_1$ to a continuous section $f_1$ on $E_1\cup S\cup Q_1.$
%Note that the set $E_1\cup Q_1$ is relatively closed in $\overline U$ and disjoint from the compact sets $Q_j$ for $j>1$. 

 Set $E_0=\emptyset$ and assume that for $j=1,\ldots,n,$ we have fixed positive constants $\delta_{j,l}<1/j,$ for $l\ge j+1,$ sets $E_j=U\setminus \cup_{l=j+1}^\infty V_{j,l}$ with $V_{j,l}$ being an open $\delta_{j,l}$-neighbourhood  of $Q_l$ in $M\setminus E_j$ that is  a finite union of open parametric  balls in $M$  and sections $f_j$ continuous on $E_j\cup F_j$  such that
$$
	f_j(x) = 
\left\{
	\begin{array}{ll}
		f_{j-1}(x),	&	x\in E_{j-1},\\
		\varphi_j(x)=\varphi(x),	&	x\in F_j,	
	\end{array}
\right.
$$
and 
\begin{equation}
\label{neighbours}
\begin{aligned}
	\frac{\mu\big(B(p,r)\cap U\cap V_{j,l}\big)}{\mu\big(B(p,r)\cap U\big)} < \frac{1}{2^l}, \quad 
	\forall p\in \cup_{k=1}^j(S_k\cup Q_k), \quad \forall r\le 1,\\
	\mu\big(B(p,r)\cap U\cap V_{j,l}\big)=0, \quad \forall p\in \cup_{k=1}^j(S_k\cup Q_k), \quad \forall r<\delta_{j,l},
\end{aligned}
\end{equation}
 for $j=1,\ldots,n$ and $l=j+1,j+2,\ldots$. For the step $n+1$, using   Lemma \ref{bundleQK} again we have that for every natural number $l>n+1$  there is an open $\delta_{n+1,l}$-neighbourhood $V_{n+1,l}$ of $Q_l$ in $M\setminus E_n$ such that
\begin{equation*}
\begin{aligned}
	\frac{\mu\big(B(p,r)\cap U\cap V_{n+1,l}\big)}{\mu\big(B(p,r)\cap U\big)} < \frac{1}{2^l}, \quad 
	\forall p\in \cup_{k=1}^{n+1}(S_k\cup Q_k), \quad \forall r\le 1,\\
	 \mu\big(B(p,r)\cap U\cap V_{n+1,l}\big)=0, \quad \forall p\in \cup_{k=1}^{n+1}(S_k\cup Q_k), \quad \forall r<\delta_{n+1,l}.
\end{aligned}
 \end{equation*}
Without loss of generality we can assume that $\delta_{n+1,l}<1/(n+1)$,   and, by the compactness of $Q_l$, the sets $V_{n+1,l}$ can be chosen to be a finite union of open parametric balls in $M$. 
Set
\[
E_{n+1}=U\setminus \cup_{l=n+2}^\infty V_{n+1,l}.
\]

Note that $E_n\cup F_{n+1}$ is relatively closed in $E_{n+1}\cup F_{n+1}$. Furthermore, the section $f_{n+1}$ defined as $f_{n+1}=f_{n}$ on $E_n$ and  $f_{n+1}=\varphi_{n+1}=\varphi$ on $F_{n+1}$ is continuous on the set $E_n\cup F_{n+1}$ since $E_n\cap V_{n,n+1}=\emptyset$. Therefore, by the Tietze extension theorem we can extend the section $f_{n+1}$ to a continuous section on $E_{n+1}\cup F_{n+1}$ that we denote in the same way. 

Note also that $\cup_{n=1}^\infty E_n=U$. Indeed, if $x\in U$, since $U$ is open there exists $r_x>0$ so that $B(x,r_x)\subset U$. Fix a natural number $l_0$ so that $\frac{1}{l_0}<r_x$. Then, for every $l>l_0$, since $V_{n,l}$ is a  $\frac{1}{l}$-neighbourhood  of $Q_l\subset \partial U$ in $M$, we have that $x\notin V_{n,l}$ for every natural number $n$. Thus, for $n\geq l_0$ we have that $x\in E_{n}=U\setminus \cup_{l=n+1}^\infty V_{n,l}$.  Then, the section $u,$ defined on $U$ as $u(x)=f_n(x)$ if $x\in E_n,$ is continuous at $x.$ Since $x$ is arbitrary we have the $u$ is continuous on $U$. 

There only remains to show that,
for every $p\in F$, $u(x)\to \varphi(p)$, as $x\to p$ outside a set of $\mu$-density 0 at $p$ relative to $U.$ 
For this, fix $p\in F.$ Then we can find a natural number $n$ so that $p\in F_n$. Note that $f_n$ is continuous on $E_n\cup F_n$ and we have defined  $u=f_n$ on $E_n$ and $f_n=\varphi_n=\varphi$ on $F_n.$  Therefore, for every $p\in F$, $u(x)\to \varphi(p)$ as $x\to p$ in $E_n.$ By construction, $U\setminus E_n$ has $\mu$-density $0$ at $p$  relative to $U.$ 
\end{proof}

Before we continue, we introduce some terminology.  A compact subset $K\subset M$ is a parametric Mergelyan set if there is an open parametric ball 
$\varphi:W\to\B$, with $K\subset W$, and a compact set $Q\subset \B,$ such that $\B\setminus Q$ is connected and $K=\varphi^{-1}(Q).$ 
A subset $E$ of a manifold $M$ is a 
{\it Mergelyan chaplet}, which we simply call a chaplet, if it is the countable disjoint union of a (possibly infinite)
locally finite family $E_{j}$ of pairwise disjoint  parametric Mergelyan sets 
$E_j.$ We denote the chaplet by $E=(E_j)_j.$ By Corollary \ref{locally finite}, a chaplet is a Runge-Carleman set.

\begin{proof}[Proof of Theorem \ref{bundles}]
By Lemma \ref{continuous}, there exists a set $F,$ with $S\subset F\subset  \partial U$ and $\nu(\partial U\setminus F)=0,$ and 
$u\in C(U,\xi)$ on $U,$  such that, for every $p\in F$, $u(x)\to \varphi(p)$, as $x\to p$ outside a set of $\mu$-density 0 at $p$ relative to $U.$  

Let $\mathcal S=\{S_l\}_{l=1}^{\infty}$ be a locally finite family of smoothly bounded compact parametric balls $S_l$ in $U$ such that $U=\cup_lS_l^0$ and $|S_l|<dist(S_l,\partial U),$ where $|S_l|$ denotes the diameter of $S_l$.  Assume also that none of these balls contains another.  
We may also assume that the balls  become smaller as we approach $\partial U$ so that the oscillation $\omega_l=\omega_l(u)$ of $u$ on $S_l$ is less than $1/l,$ for each $l.$ 
Let $s_l=\partial S_l$ for $l\in \mathbb N$.   Since $\mu$ is absolutely continuous,  
Lemma \ref{smallremoval}  tells us that
there is an open neighbourhood $R_{j}$ of $s_j$ in $U$ such that 
\begin{equation}
\label{Rj}
	\frac{\mu\big(R_{j}\cap  B(p,r)\big)}{\mu\big(U\cap B(p,r)\big)} < \frac{1}{2^j}, \quad \forall \, p\in  \partial U, \quad \forall \, r>0. 
\end{equation}

Without loss of generality we may assume that each $R_{j}$ is a smoothly bounded shell. That is, that in a local coordinate system, $R_{j}=\{x: 0<\rho_{j}<\|x\|<1\}$. By the local finiteness of $\mathcal S$ we may also assume that if $s_j\cap s_l=\emptyset$, then $R_j\cap R_l=\emptyset$.

Consider the closed set $A=U\setminus \bigcup_{k=1}^\infty R_{k}$. Then, denoting by $H_j=S_j^0\cap A$ and $A_j=H_j\setminus\bigcup_{k=1}^{j-1}S_k^0$ we have that 
\[
	A = \bigcup_{j=1}^\infty( S_j^0\cap A) = 
	\bigcup_{j=1}^\infty H_j = \dot{\bigcup}_{j=1}^\infty\left( H_j\setminus\bigcup_{k=1}^{j-1}S_k^0\right) = \dot{\bigcup}_{j=1}^\infty A_j.
\]
For each $j,$ the set  $H_j$ is a parametric Mergelyan set in $S_j^0$  and the family $(H_j)_{j=1}^\infty$ is locally finite, but they may not be disjoint. However, the $(A_j)_{j=1}^\infty$ form a locally finite family of disjoint compacta and hence $A=(A_j)_{j=1}^\infty$ is a Mergelyan chaplet.

Let us fix now a continuous function $\varepsilon:A\to (0,1]$ so that $\varepsilon(x)\to 0,$  when $x\to\partial U$. Since $(A_j)_{j=1}^\infty$ is a locally finite family of compacta, we may construct a family $(V_j)_{j=1}^\infty$ of disjoint open neigbouhoods $A_j\subset V_j, \, j=1,2,\ldots.$
For each $A_j$ of $A$  we choose a point $x_{A_{j}}\in A_{j}$ and define a function $g$ on $V=\cup_jV_j$
as $g(x)=\sum_{j}u(x_{A_{j}})\chi_{V_{j}}(x).$ Since the function $g$ is constant in each connected component $V_j$ and $P$ annihilates constants we have that $Pg=0$ on $V$.

We claim that $U^*\setminus A=\cup_kR_k$ is connected. Choose some $R_j$ and let $\mathcal R_j$ be the connected component of $\cup_kR_k$ containing $R_j.$ Then $\mathcal R_j$ is the union of a subfamily of $(R_k)_{k=1}^\infty.$ Let us show that this subfamily connects $R_j$ with $*$ in $U^*$. Suppose this is not the case. Conside the sets $V=\cup_{s_l\subset\mathcal R_j}  S_l^{\circ}$ and $W=\cup_{s_l\not\subset\mathcal R_j}  S_l^{\circ}$. Both sets are open because they are the union of open sets.  Note  that if $s_k\cap s_l=\emptyset$, then $R_k\cap R_l=\emptyset.$  
 Now, for every set $s_l$, either $s_l$ intersects some $s_k\subset \mathcal R_j$ or it is disjoint from every $s_k\subset \mathcal R_j,$ in which case $R_\ell$ is disjoint from every $R_k\subset \mathcal R_j.$ In the second case,  $R_\ell$ cannot be in the bounded complementary component of any $R_k$ with $R_k\subset\mathcal R_j$, for then $S_\ell$ would be a subset of $S_k^0$ which is forbidden. Therefore $R_\ell$ and consequently $S_\ell^0$ lies in the unbounded complementary component of every $R_k$ with $R_k\subset\mathcal R_j$. This means that $S_ \ell\cap S_k=\emptyset.$ We have shown that, if $s_\ell\not\subset \mathcal R_j,$ then $S_\ell\cap S_k=\emptyset,$ for every $s_k\subset\mathcal R_j$ and consequently $V\cap W=\emptyset.$
If, as we supposed, $\mathcal R_j$ is bounded in $U,$ both $V$ and $W$ are non-empty and this  contradicts the assumption that $U$ is connected. Thus, every $\mathcal R_j$ is  unbounded in $U.$ Since $U^*\setminus A=\cup_j(\mathcal R_j\cup\{*\})$ is the union of a family of connected sets having point $*$ in common, it follows that $U^*\setminus A$ is connected as claimed.

By Corollary \ref{locally finite}, there exists a  function   $\widetilde\varphi \in C^\infty(U,\vartheta)$ with $P\widetilde\varphi=0,$ such that $|\widetilde\varphi-g|<\varepsilon$ on $A$.
We show now that 
\begin{equation}\label{approach in A}
	|\widetilde\varphi(x)-u(x)|\to 0, \quad \mbox{as} \quad x\to p
	\in \partial U, \quad x\in A.
\end{equation}

If $(x_n)_{n=1}^\infty$ is a sequence of points in $A$ tending to $p\in \partial U$, then $(x_{A_{j_n}})_{n=1}^\infty$ is also a sequence of points in $A$ tending to $p\in \partial U$, where $x_{A_{j_n}}$ is the previously fixed point in the  $A_{j_n}$ of  $A$ containing $x_n$. Indeed this follows automatically since
\[
d(x_{A_{j_n}},x_n)\leq \vert A_{j_n}\vert\leq d(A_{j_n},\partial U)\leq d(x_n,\partial U)\to 0,
\]
when $n$ goes to infinity.

Also, 
	\begin{align*}
		\limsup_{n\to\infty}\vert \widetilde \varphi(x_{n})-u(x_n)\vert
& \leq\limsup_{n\to\infty}\big(\vert  \widetilde \varphi(x_{n}) -g(x_n)\vert+\vert g(x_{n})-u(x_n)\vert\big)\\
& \leq\limsup_{n\to\infty}\big(\varepsilon(x_{n})+\vert u(x_{A_{j_{n}}})-u(x_n)\vert\big)\\
& \leq \limsup_{n\to\infty}\big(\varepsilon(x_{n})+ \omega_{j_n}(u)\big)
		=0.
	\end{align*}

We now show that $A$ satisfies that, 
\begin{equation}\label{density A}
	\mu_{U}(A,p)=\liminf_{r\rightarrow 0}\frac{\mu\big(B(p,r)\cap A\big)}{\mu\big(B(p,r)\cap U\big)}=1.
\end{equation}
For this we shall show that
$$
	\limsup_{r\rightarrow 0}\frac{\mu(B(p,r)\cap(U\setminus A))}{\mu(B(p,r)\cap U)}=0.
$$
For fixed $\varepsilon>0$ consider $j_\varepsilon$ so that
$$
	\sum_{j\ge j_\varepsilon} 2^{-j}<\varepsilon.
$$

Consider $r_\varepsilon>0$  so that $B(p,r_\varepsilon)$ is disjoint from the  neigbourhoods $R_{j}$ of the sets $s_j$ for $j\le j_\varepsilon$.  Then, for all $r<r_\varepsilon$, %\min\{r_\varepsilon,1/{j_{\varepsilon}},d(K_{j_\varepsilon},\partial U)\},$
 since $U\setminus A = \cup_j R_{j}$, we have that
\begin{align*}
	\hspace{1cm}&\hspace{-1cm}\frac{\mu(B(p,r)\cap(U\setminus A))}{\mu(B(p,r)\cap U)}
	 = \frac{\mu(B(p,r) \cap\left( \cup_j R_{j}\right))}{\mu(B(p,r)\cap U)}\\
	 &\le\sum_{j>j_\varepsilon}\frac{\mu(B(p,r)\cap R_{j})}{\mu(B(p,r)\cap U)}\\
	 &\le \sum_{j>j_\varepsilon}2^{-j} <\varepsilon.\hspace{2cm}(\text{by } \eqref{Rj})
	\end{align*}
	Thus, the $\mu$-density of $U\setminus A$  relative to $U$ at $p$ is at most $\varepsilon.$ Since $p$ and $\varepsilon$ are arbitrary, this proves 
(\ref{density A}). 

	Note that the function $u$ has all of the properties desired in the theorem, except that of satisfying the differential equation $Pu=0.$ The function $\widetilde\varphi$ does satisfy the equation $P\widetilde\varphi=0$ and also satisfies the desired properties, because of (\ref{approach in A}) and (\ref{density A}).

\end{proof}

\end{document}